\documentclass[11pt]{amsart}
\usepackage[margin=1.35in]{geometry}
\usepackage{amscd,amsmath,amsxtra,amsthm,amssymb,stmaryrd,xr,mathrsfs,mathtools,enumerate,commath, comment, enumitem}
\usepackage{stmaryrd}
\usepackage{xcolor}
\usepackage{commath}
\usepackage{comment}
\usepackage{tikz-cd}
\usepackage{longtable} 
\usepackage{pdflscape} 
\usepackage{booktabs}
\usepackage{hyperref}
\definecolor{vegasgold}{rgb}{0.77, 0.7, 0.35}
\definecolor{darkgoldenrod}{rgb}{0.72, 0.53, 0.04}
\definecolor{gold(metallic)}{rgb}{0.83, 0.69, 0.22}
\hypersetup{
 colorlinks=true,
 linkcolor=darkgoldenrod,
 filecolor=brown,      
 urlcolor=gold(metallic),
 citecolor=darkgoldenrod,
 }
\newtheorem{lthm}{Theorem}

\usepackage[all,cmtip]{xy}
\usepackage[T1]{fontenc}

\DeclareFontFamily{U}{wncy}{}
\DeclareFontShape{U}{wncy}{m}{n}{<->wncyr10}{}
\DeclareSymbolFont{mcy}{U}{wncy}{m}{n}
\DeclareMathSymbol{\Sh}{\mathord}{mcy}{"58}
\usepackage[T2A,T1]{fontenc}
\usepackage[OT2,T1]{fontenc}

\newtheorem{theorem}{Theorem}[section]
\newtheorem{lemma}[theorem]{Lemma}

\newtheorem*{theorem*}{Theorem}
\newtheorem*{ass*}{Assumption}
\newtheorem{definition}[theorem]{Definition}
\newtheorem{corollary}[theorem]{Corollary}

\newtheorem{proposition}[theorem]{Proposition}

\newcommand{\cF}{\mathcal{F}}

\newcommand{\fl}{\mathfrak{l}}

\newcommand{\cU}{\mathcal{U}}

\newcommand{\cH}{\mathcal{H}}

\makeatletter
\DeclareFontFamily{OMX}{MnSymbolE}{}
\DeclareSymbolFont{MnLargeSymbols}{OMX}{MnSymbolE}{m}{n}
\SetSymbolFont{MnLargeSymbols}{bold}{OMX}{MnSymbolE}{b}{n}
\DeclareFontShape{OMX}{MnSymbolE}{m}{n}{
    <-6>  MnSymbolE5
   <6-7>  MnSymbolE6
   <7-8>  MnSymbolE7
   <8-9>  MnSymbolE8
   <9-10> MnSymbolE9
  <10-12> MnSymbolE10
  <12->   MnSymbolE12
}{}
\DeclareFontShape{OMX}{MnSymbolE}{b}{n}{
    <-6>  MnSymbolE-Bold5
   <6-7>  MnSymbolE-Bold6
   <7-8>  MnSymbolE-Bold7
   <8-9>  MnSymbolE-Bold8
   <9-10> MnSymbolE-Bold9
  <10-12> MnSymbolE-Bold10
  <12->   MnSymbolE-Bold12
}{}
\let\llangle\@undefined
\let\rrangle\@undefined
\DeclareMathDelimiter{\llangle}{\mathopen}%
                     {MnLargeSymbols}{'164}{MnLargeSymbols}{'164}
\DeclareMathDelimiter{\rrangle}{\mathclose}%
                     {MnLargeSymbols}{'171}{MnLargeSymbols}{'171}
\makeatother
\newcommand{\Z}{\mathbb{Z}}

\newcommand{\Q}{\mathbb{Q}}
\newcommand{\F}{\mathbb{F}}

\newcommand{\cC}{\mathcal{C}}
\newcommand{\cO}{\mathcal{O}}

\newcommand{\op}[1]{\operatorname{#1}}

\numberwithin{equation}{section}

\begin{document}

\title[Galois surjectivity for most Drinfeld modules]{Galois representations are surjective for almost all Drinfeld modules}

\author[A.~Ray]{Anwesh Ray}
\address[Ray]{Chennai Mathematical Institute, H1, SIPCOT IT Park, Kelambakkam, Siruseri, Tamil Nadu 603103, India}
\email{anwesh@cmi.ac.in}

\keywords{Galois representations, Drinfeld modules, function fields in postive characteristic, density results}
\subjclass[2020]{11F80, 11G09, 11R45}

\maketitle

\begin{abstract}
This article advances the results of Duke on the average surjectivity of Galois representations for elliptic curves to the context of Drinfeld modules over function fields. Let $F$ be the rational function field over a finite field. We establish that for Drinfeld modules of rank $r \geq 2$, the $T$-adic Galois representation:
$\hat{\rho}_{\phi, T}: \op{Gal}(F^{\op{sep}}/F) \rightarrow \op{GL}_r(\F_q\llbracket T\rrbracket)$ is surjective for a density $1$ set of such modules. The proof utilizes Hilbert irreducibility (over function fields), Drinfeld's uniformization theory and sieve methods.
\end{abstract}

\section{Introduction}
\subsection{Background and motivation} Elliptic curves naturally give rise to Galois representations, and their arithmetic is encoded in the Galois theoretic properties of these representations. Given an elliptic curve $E_{/\Q}$ and a natural number $n$, let $E[n]$ denote the $n$-torsion subgroup of $E(\bar{\Q})$, equipped with the natural action of the absolute Galois group $\op{G}_{\Q}:=\op{Gal}(\bar{\Q}/\Q)$. This action is encoded by the representation 
\[\rho_{E, n}: \op{G}_{\Q}\rightarrow \op{GL}_2(\Z/n\Z).\] There has been much interest in the study of the images of such representations, and the analysis and classification of these images is possible via the study of modular curves and their geometric properties, cf. for instance \cite{Mazurmodcurves,Sutherland, zywina}. Let $\hat{\rho}:\op{G}_{\Q}\rightarrow \op{GL}_2(\widehat{\Z})$ be the adelic Galois representation on the Tate module $\mathbb{T}(E):=\varprojlim_n E[n]$. Serre's open image theorem states that the image $\hat{\rho}$ has finite index in $\op{GL}_2(\widehat{\Z})$. A prime $\ell$ is \emph{exceptional} if $\rho_{E, \ell}$ is not surjective. Duke \cite{duke1997elliptic} showed that that most elliptic curves (ordered according to height) have no exceptional primes. Jones \cite{Jones} obtained certain refinements of this result, proving that the index $[\op{G}_{\Q}:\hat{\rho}(\op{G}_{\Q})]=2$ for most elliptic curves $E_{/\Q}$. 
\subsection{Main result}
\par Let $p$ be a prime number, $q$ a power of $p$ and $\F_q$ the finite field with $q$ elements. Set $A:=\F_q[T]$ and $F$ the rational function field $\F_q(T)$. A Drinfeld module over $A$ is an additive $A$-module structure on the additive group scheme over $F$. These objects were introduced by Drinfeld and give natural analogues of elliptic curves over function fields. Much like elliptic curves, they give rise to compatible families of Galois representations in characteristic $p$, and they occupy a central role in the Langlands program in positive characteristic. In this context, little work has been done with regard to the classification of Galois images. Pink and R\"utsche \cite{pinkrutsche} proved an analogue of Serre's open image theorem. Zywina \cite{zywina2011drinfeld} exhibited an explicit Drinfeld module of rank $2$ for which the adelic Galois representation is surjective. The goal of this article to prove that the $T$-adic Galois representations associated to Drinfeld modules are surjective on average.
\begin{lthm}[Theorem \ref{main thm end}]\label{main thm}
    Let $r\geq 2$ be an integer. Then the $T$-adic Galois representation 
    \[\hat{\rho}_{\phi, T}:\op{Gal}(F^{\op{sep}}/F)\rightarrow \op{GL}_r(\F_q\llbracket T\rrbracket)\]
    for a density $1$ set of Drinfeld modules $\phi$ over $A$ of rank $r\geq 2$. 
\end{lthm}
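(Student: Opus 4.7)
The approach is to combine Pink--Rütsche's open image theorem with a group-theoretic reduction to finite level, Drinfeld's uniformization theory for moduli of Drinfeld modules with level structure, and Hilbert irreducibility plus a sieve to bound the density of exceptional modules. I would parametrize rank-$r$ Drinfeld modules over $A$ by $\phi_T = T + a_1\tau + \cdots + a_r\tau^r$ with $a_i \in A$ and $a_r \neq 0$, ordered by the height $h(\phi) := \max_i \deg(a_i)$, and write $\mathcal{D}_r(X)$ for the set of such $\phi$ of height at most $X$. The target is
\[
\lim_{X \to \infty} \frac{\#\{\phi \in \mathcal{D}_r(X) : \hat{\rho}_{\phi,T} \text{ is not surjective}\}}{\#\mathcal{D}_r(X)} = 0.
\]

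\emph{Step 1 --- reduction to finite level.} By Pink--R\"utsche the image $H_\phi = \hat{\rho}_{\phi,T}(\op{Gal}(F^{\op{sep}}/F))$ is always open in $\op{GL}_r(\F_q\llbracket T\rrbracket)$. Let $K_m = \ker\bigl(\op{GL}_r(\F_q\llbracket T\rrbracket) \to \op{GL}_r(A/T^m)\bigr)$; the graded pieces $K_m/K_{m+1}$ are isomorphic to $\mathfrak{gl}_r(\F_q)$ with the adjoint action of $\op{GL}_r(\F_q)$, and $K_1$ is a pro-$p$ group. A Frattini-style argument combining semisimplicity properties of $\mathfrak{gl}_r(\F_q)$ as a $\op{GL}_r(\F_q)$-module with commutator relations in the congruence filtration should produce an integer $n_0 = n_0(r,q)$ such that any closed subgroup of $\op{GL}_r(\F_q\llbracket T\rrbracket)$ surjecting onto $\op{GL}_r(A/T^{n_0})$ is the full group. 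This reduces the question to showing that $\rho_{\phi, T^{n_0}}$ is surjective for a density-one set of $\phi$.

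\emph{Step 2 --- Hilbert irreducibility via Drinfeld uniformization.} Using Drinfeld's uniformization theory, I would realize the moduli $\mathcal{M}_r^{[T^{n_0}]}$ of Drinfeld modules of rank $r$ with full $T^{n_0}$-level structure as a finite, geometrically connected, étale cover of the coarse moduli space $\mathcal{M}_r$, with Galois group $\op{GL}_r(A/T^{n_0})$. For a proper maximal subgroup $M \subsetneq \op{GL}_r(A/T^{n_0})$, the condition $\rho_{\phi, T^{n_0}}(\op{Gal}(F^{\op{sep}}/F)) \subseteq gMg^{-1}$ for some $g$ is equivalent to $\phi$ lying in the image of the intermediate cover $\mathcal{M}_r^{[T^{n_0}]}/M \to \mathcal{M}_r$ at the level of $F$-rational points. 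Hilbert irreducibility over $F$, applied to each such proper cover, shows that this set of $F$-rational points is thin; summing over the finitely many conjugacy classes of maximal $M$ and invoking a sieve of large-sieve type gives a bound of $o(\#\mathcal{D}_r(X))$ on the number of $\phi \in \mathcal{D}_r(X)$ for which $\rho_{\phi, T^{n_0}}$ is not surjective.

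\emph{Main obstacle.} I expect the principal difficulty to lie in Step 1, the uniform group-theoretic reduction to a fixed finite level in positive characteristic. Since $p = \op{char}(\F_q)$ divides $|\op{GL}_r(\F_q)|$, the extension $1 \to K_1 \to \op{GL}_r(\F_q\llbracket T\rrbracket) \to \op{GL}_r(\F_q) \to 1$ is non-split, and the clean Lie-theoretic lifting argument available in the elliptic curve case (for $\ell \geq 5$) breaks down; one must instead propagate surjectivity through several graded layers of the congruence filtration, probably handling the scalar and trace-zero components of $\mathfrak{gl}_r(\F_q)$ separately, and taking care of low-$q$ or low-$r$ anomalies (e.g.\ when $\op{SL}_r(\F_q)$ has exceptional covers or $\mathfrak{sl}_r(\F_q)$ is not irreducible). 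A secondary technical point is ensuring the effective form of Hilbert irreducibility over $F$ is strong enough, uniformly across the choices of proper $M$, so that the sieve bound still produces a $o(1)$ exceptional proportion after summing.
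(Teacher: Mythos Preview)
Your proposal has the right large-scale ingredients but misidentifies where the difficulty lies, and the approach in Step~2 has a gap that is close to circular. The ``main obstacle'' you flag in Step~1 is in fact already resolved: Pink and R\"utsche prove exactly such a finite-level criterion (their Proposition~4.1, quoted in the paper as Proposition~\ref{PR prop}), and it says that for $q\geq 4$ a closed subgroup $\mathcal{H}\subseteq\op{GL}_r(\F_q\llbracket T\rrbracket)$ is the whole group as soon as (i) $\mathcal{H}\bmod T=\op{GL}_r(\F_q)$, (ii) $\det\mathcal{H}=\F_q\llbracket T\rrbracket^\times$, and (iii) $\mathcal{H}^{[1]}$ contains a non-scalar matrix. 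So $n_0=2$ already suffices, with these side conditions. The actual work is in verifying (i)--(iii) for a density-$1$ set of $\phi$.

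Your Step~2 proposes to apply Hilbert irreducibility directly to the level-$T^{n_0}$ moduli cover, but for this you need to know that the cover $\mathcal{M}_r^{[T^{n_0}]}\to\mathcal{M}_r$ is geometrically irreducible over $F$ with Galois group the full $\op{GL}_r(A/T^{n_0})$; equivalently, that the \emph{generic} rank-$r$ Drinfeld module over $\F_q(g_1,\dots,g_r)$ already has surjective mod-$T^{n_0}$ image. You assert this via ``Drinfeld's uniformization theory'', but the rigid-analytic uniformization of the moduli space does not by itself compute the arithmetic monodromy over $F$, and for general $r$ no Drinfeld module with full mod-$T^2$ image is known a priori (the paper's introduction explicitly notes this is open). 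So your Step~2 assumes essentially what is to be proved. The paper avoids this circularity by splitting along the Pink--R\"utsche criterion: condition~(i) is handled by HIT at level $T$ only (where the generic monodromy \emph{is} known, cf.\ the reference to \cite{Galoisadd}); condition~(ii) comes from the Weil pairing plus Gekeler's rank-$1$ result (Theorem~\ref{gekeler index thm}); and condition~(iii) is obtained not from moduli but from the \emph{local} Drinfeld--Tate uniformization at a prime $\mathfrak{l}\neq(T)$ of stable reduction of rank $r-1$, which produces explicit congruence conditions on $(g_{r-1},g_r)$ (namely $v_{\mathfrak{l}}(g_{r-1})=0$ and $p\nmid v_{\mathfrak{l}}(g_r)$) forcing the inertia image at level $T^2$ to contain a non-scalar. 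A sieve then shows these local conditions are met for density~$1$. Note in particular that ``Drinfeld uniformization'' in the paper means this local Tate-type uniformization, not the moduli-theoretic one you invoke; that local analysis is the genuine technical heart of the argument and is absent from your plan.
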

This notion of density is made precise in section \ref{s 4}. It was shown by the author in \cite{raytadic} that the set of Drinfeld modules of rank $2$ for which \[\hat{\rho}_{\phi, T}:\op{Gal}(F^{\op{sep}}/F)\rightarrow \op{GL}_2(\F_q\llbracket T\rrbracket)\]
has positive density. This result is an improvement since not only does it apply for all $r\geq 2$, but moreover the result is satisfied for almost all Drinfeld modules.

\subsection{Methodology}
\par The results in this article are proven via a synthesis of techniques from Galois theory and arithmetic statistics. A criterion of Pink of R\"utsche (cf. Proposition \ref{PR prop}) gives conditions for which $\hat{\rho}_{\phi, T}$ is surjective. In fact, it suffices to study the mod-$T^2$ reduction of $\hat{\rho}_{\phi, T}$. An analogue of the Hilbert irreducibility theorem over the rational function field \cite[Corollary 3.5]{HITFF} shows that the mod-$T$ representation is surjective for most Drinfeld modules of rank $r$ (see Proposition \ref{cc prime =1}). In order to study the mod-$T^2$ image, I introduce further local conditions at all primes $\mathfrak{l}$ of $F$ such that $\mathfrak{l}\neq \{(T), \infty\}$. These conditions are introduced via an in depth analysis of Drinfeld--Tate uniformizations. In section \ref{s 6}, standard sieve theoretic arguments are then used to prove Theorem \ref{main thm}. Here is a more concise single-paragraph version:

\subsection{Outlook} The study of Galois representations attached to Drinfeld modules suggests a number of distribution problems that merit further investigation. One natural step is to investigate the full adelic Galois image of Drinfeld modules of specific ranks. However, achieving such results remains challenging. Even if the residual Galois representation is surjective modulo two distinct primes $\mathfrak{p}$ and $\mathfrak{q}$, the image modulo $\mathfrak{p}\mathfrak{q}$ may still be strictly smaller than $\mathrm{GL}_r(A/\mathfrak{p}\mathfrak{q})$ due to arithmetic entanglements between the corresponding torsion fields. Put differently, the division fields attached to different primes need not be linearly disjoint, and controlling such intersections requires a much finer analysis than what is currently available. The argument of Jones proving that $100\%$ of elliptic curves over $\mathbb{Q}$ have surjective adelic Galois image cannot be adapted to the Drinfeld setting, since it relies on analytic estimates with no known analogue over global function fields. Following Duke, the method uses a large sieve to control exceptional primes, and its key input is an estimate for weighted sums of Hurwitz class numbers in arithmetic progressions, obtained via the Fourier expansion of half-integral weight modular forms together with the Ramanujan bound for their coefficients \cite[Lemma~3]{duke1997elliptic}. For Drinfeld modules, these methods do not carry over.
\par A related problem that has been of interest is that of constructing examples of Drinfeld modules of general rank $r$ with maximal Galois image. This question is still unresolved. Another potential direction of inquiry is to characterize the mod-$T$ (and $T$-adic) Galois images of Drinfeld modules over rational function fields.

\subsection*{Data availability} No data was analyzed in proving the results in the article.
\subsection*{Conflict of Interest} There is no conflict of interest that the author wishes to report.

\subsection*{Acknowledgment
}We would like to thank the referee for the excellent report.

\section{Basic notions}
\subsection{Notation}
Throughout, we let $p$ be an odd prime number and set $ q = p^n $. We denote the finite field with $ q $ elements by $\mathbb{F}_q$, and assume that $ q \geq 5 $. Let $ A = \mathbb{F}_q[T] $ be the polynomial ring in one variable over $\mathbb{F}_q$, and $ F = \mathbb{F}_q(T) $ its fraction field. A prime of $F$ is the maximal ideal in a discrete valuation ring $R\subset F$, whose fraction field is $F$. The primes can thus also be identified with the set of isomorphism classes discrete valuations of $F$. The set of primes is denoted by $\Omega_F$, let $\Omega_A\subset \Omega_F$ be the primes that arise from the non-zero prime ideals of $A$. The only other prime is $\infty$, corresponding to the valuation defined by setting $v_\infty(T)=-1$. When we write $\mathfrak{l}\in \Omega_A$, we think of $\mathfrak{l}$ as a non-zero prime ideal in $A$. On the other hand, when writing $v_{\mathfrak{l}}$, we refer to the associated valuation. Note that $\mathfrak{l}$ is principal, so is generated by a monic irreducible polynomial $a_{\mathfrak{l}}$. The valuation $v_{\mathfrak{l}}$ will be normalized by setting $v_{\mathfrak{l}}(a_{\mathfrak{l}}):=1$. Given $\mathfrak{l}\in \Omega_A$, let $A_\mathfrak{l}$ be the completion of $A$ at $\mathfrak{l}$ and set $F_\mathfrak{l}$ to be the fraction field of $A_\mathfrak{l}$. Take $\mathfrak{m}_\mathfrak{l}$ to be the maximal ideal of $A_\mathfrak{l}$ consisting of all elements with positive valuation. Set $k_\mathfrak{l}$ to denote the residue field $A_\mathfrak{l}/\mathfrak{m}_\mathfrak{l}$. For a non-zero ideal $\mathfrak{a}$ in $ A $ generated by a polynomial $ a $, the degree of $\mathfrak{a}$ is defined as $\deg \mathfrak{a} = \deg_T(a) $. One notes that $\deg \mathfrak{a}=\dim_{\F_q} (A/\mathfrak{a})$. 

 \subsection{Drinfeld Modules and $\F_q$-linear polynomials}
\par Drinfeld modules extend the classical theory of elliptic curves to function fields. In this subsection, we introduce these objects and the basic properties of their associated Galois representations. For a more detailed exposition, please refer to \cite{papibook}.

\begin{definition}
    An $ A $-field is a field $ K $ with an $\mathbb{F}_q$-algebra homomorphism $\gamma: A \rightarrow K$. The $ A $-characteristic of $ K $ is defined by:
\[
\op{char}_A(K) = 
\begin{cases}
0 & \text{if } \gamma \text{ is injective}, \\
\ker \gamma & \text{otherwise}.
\end{cases}
\]
If $\gamma$ is injective, $ K $ is said to be of generic characteristic.
\end{definition}
We set $ K^{\op{sep}} $ to denote a separable closure of $ K $, and $\op{G}_K = \op{Gal}(K^{\op{sep}}/K) $ the absolute Galois group of $ K $. 
\par A Drinfeld module is a faithful $A$-module structure on the additive group scheme. In order to make this notion precise, we introduce the $\F_q$-algebra of non-commutative twisted polynomials over $K$. Given an $\mathbb{F}_q$-algebra $ K $, the ring of twisted polynomials $ K\{\tau\} $ is defined by the rules:
\begin{itemize}
    \item Addition is done term-wise.
    \item Multiplication is defined according to the rule $(a\tau^i)(b\tau^j) = ab^{q^i} \tau^{i+j}$.
\end{itemize}
For a twisted polynomial $ f(\tau) = \sum_{i=0}^d a_i \tau^i $, define the action on $ x $ by:
\[
f(x) = \sum_{i=0}^d a_i x^{q^i}.
\]
One thus gets a polynomial that is $\F_q$-linear, i.e., 
\begin{itemize}
    \item $f(x+y)=f(x)+f(y)$, 
    \item $f(c x)=c f(x)$ for $c\in \F_q$. 
\end{itemize}
One defines the \emph{height} $\op{ht}_\tau(f) $ and \emph{degree} $\op{deg}_\tau(f) $ of $ f(\tau) $. Writing \[f(\tau) = a_h \tau^h + a_{h+1} \tau^{h+1} + \dots + a_d \tau^d,\] where $ a_h, a_d \neq 0$, set
\[
\op{ht}_\tau(f) := h \quad \text{and} \quad 
\op{deg}_\tau f(\tau):=d.\]
Note that the degrees of $f(\tau)$ and $f(x)$ are related by $\op{deg}_x f(x) = q^{\op{deg}_\tau(f)}$. Consider the derivative map $\partial: K\{\tau\}\rightarrow K$ defined by 
\[\partial\left(\sum_{i=0} a_i \tau^i\right):=a_0,\] and note that $\partial(f)=\frac{df}{dx}$. A polynomial $ f(x) $ is separable if its derivative in $ x $ does not vanish. For twisted polynomials, this corresponds to the property that $\op{ht}_\tau(f) = 0$.

\begin{definition}
    A Drinfeld module of rank $ r \geq 1 $ over an $ A $-field $ K $ is an $\mathbb{F}_q$-algebra homomorphism $\phi: A \rightarrow K\{\tau\}$ such that:
    \begin{itemize}
        \item $\partial(\phi_a) = \gamma(a)$ for all $a \in A$,
        \item $\op{deg}_\tau(\phi_a) = r \cdot \op{deg}_T(a)$.
    \end{itemize}
Here, $\phi_T = T + g_1 \tau + g_2 \tau^2 + \dots + g_r \tau^r$ with $ g_r \neq 0 $.
\end{definition}
Given a tuple $\vec{g}=(g_1, \dots, g_r)\in K^r$ such that $g_r\neq 0$, one obtains a Drinfeld module $\phi^{\vec{g}}$ of rank $r$ defined by setting
\[\phi_T^{\vec{g}}=T + g_1 \tau + g_2 \tau^2 + \dots + g_r \tau^r.\]
Given a Drinfeld module $\phi$, there is a $\phi$-twisted $A$-module structure on $K$. For $\alpha\in K$ and $b\in A$, set $b\cdot \alpha:=\phi_b(\alpha)$. We denote the associated $A$-module structure by $^\phi K$. In fact, given any extension $L/K$ contained in $K^{\op{sep}}$, the same procedure gives a twisted $A$-module structure on $\phi$. 
\begin{definition}
    Let $\phi, \psi: A\rightarrow K\{\tau\}$ be a pair of Drinfeld modules over an $A$-field $K$. A morphism $u: \phi\rightarrow \psi$ is by definition a twisted polynomial $u\in K\{\tau\}$ such that $u \phi_a=\psi_a u$ for all $a\in A$. An isogeny is a non-zero morphism. 
\end{definition}
If there exists an isogeny $u: \phi\rightarrow \psi$, then $\phi$ and $\psi$ have the same rank (cf. \cite[Proposition 3.3.4]{papibook}). We set $\op{Hom}_K(\phi, \psi)$ to be the group of morphisms $u: \phi\rightarrow \psi$. When $\phi=\psi$, set $\op{End}_K(\phi):=\op{Hom}_K(\phi, \phi)$. Observe that $\phi_a\in \op{End}_K(\phi)$ for all $a\in A$. This makes $\op{End}_K(\phi)$ into an $A$-algebra, and we set $\op{End}_K^0(\phi):=F\otimes_A \op{End}_K(\phi)$. 
\par Suppose that $\op{char}_A(K)=0$, then $u\mapsto \partial(u)$ is an injective homomorphism 
\[\partial: \op{End}_K(\phi)\rightarrow K.\] In particular, this implies that $\op{End}_K(\phi)$ is a commutative ring and $\op{End}_K^0(\phi)$ is a field extension of $F$. Given Drinfeld modules $\phi$ and $\psi$ of rank $r$, the module of morphisms $\op{Hom}_K(\phi, \psi)$ is a free $A$-module of rank $\leq r^2$ (cf. \cite{Drinfeldoriginal} or \cite[Theorem 3.4.1]{papibook}).
\subsection{Galois representations}

Much like elliptic curves, Drinfeld modules naturally induce Galois representations. Consider a Drinfeld module $\phi$ of rank $ r \geq 1$ over the global function field $ F $. For a non-zero polynomial $ a \in A $, the roots of $\phi_a(x)$ form the set $\phi[a] \subset F^{\op{sep}}$. If $ b \in A $, the relation $\phi_a(\phi_b(x)) = \phi_{ab}(x)$. It is easy to check that $\phi[a]$ is an $A$-submodule of $^\phi F^{\op{sep}}$ equipped with a natural action of $\op{G}_F=\op{Gal}(F^{\op{sep}}/F)$ by $A$-linear automorphisms. For a non-zero ideal $\mathfrak{a}$ in $ A $, set $\phi[\mathfrak{a}] = \phi[a]$ where $ a $ is the monic generator of $\mathfrak{a}$. Then, $\phi[\mathfrak{a}] \cong (A/\mathfrak{a})^r$. The associated Galois representation is as follows
\[
\rho_{\phi, \mathfrak{a}}: \op{G}_F \rightarrow \op{Aut}_A(\phi[\mathfrak{a}]) \cong \op{GL}_r(A/\mathfrak{a}).
\]

The $\fl$-adic Tate module $ \mathbb{T}_\fl(\phi) $ is defined as the inverse limit:
\[
\mathbb{T}_\fl(\phi) = \varprojlim_n \phi[\fl^n].
\]
By choosing an $ A_\fl $-basis of $ \mathbb{T}_\fl(\phi) $, we have the associated $A_\fl$-adic Galois representation:
\[
\hat{\rho}_{\phi, \fl}: \op{G}_F \rightarrow \op{GL}_r(A_\fl),
\]
which can be identified with the inverse limit $\varprojlim_{n} \rho_{\phi, \fl^n}$.
\par Let $F(\phi[\mathfrak{a}])$ denote the field extension of $F$ \emph{cut out} by $\phi[\mathfrak{a}]$. More precisely, it is the fixed field of the kernel of $\rho_{\phi, \mathfrak{a}}$. The homomorphism $\rho_{\phi, \mathfrak{a}}$ factors through the quotient 
\[\op{Gal}(F(\phi[\mathfrak{a}])/F)=\op{G}_F/\op{ker}\left(\rho_{\phi, \mathfrak{a}}\right),\] and induces an injection 
\[\op{Gal}(F(\phi[\mathfrak{a}])/F)\hookrightarrow \op{GL}_r(A/\mathfrak{a}).\] Thus the Galois group $\op{Gal}(F(\phi[\mathfrak{a}])/F)$ can be identified with the image of $\rho_{\phi, \mathfrak{a}}$.

\par Given a Drinfeld module $\phi$ over $F$ of rank $r$ defined by \[\phi_T= T+g_1 \tau+\dots+ g_r \tau^r,\]
let $\psi$ be the Drinfeld module of rank $1$ defined by 
\[\psi_T=T+(-1)^{r-1}g_r \tau.\]
Let $\mathfrak{a}$ be a non-zero ideal in $A$. The \emph{Weil pairing} a $A$-multilinear, surjective, non-degenerate and alternating pairing 
\[W_{\phi, \mathfrak{a}}: \prod_{i=1}^r \phi[\mathfrak{a}]\rightarrow \psi[\mathfrak{a}]. \]
As a consequence, we have that $\op{det}\rho_{\phi, \mathfrak{a}}=\rho_{\psi, \mathfrak{a}}$, see \cite[Theorem 3.7.1]{papibook} for further details. Let $\phi$ be a Drinfeld module over $F$ and $\mathfrak{l} \in \Omega_A$. We denote by $\phi_{\mathfrak{l}}$ the localized Drinfeld module over $F_{\mathfrak{l}}$.
\begin{definition}
    We say that $\phi$ has \emph{stable reduction} at $\mathfrak{l}$ if there exists a Drinfeld module $\psi$ over $F_\mathfrak{l}$ that is isomorphic to $\phi_\mathfrak{l}$ with coefficients in $A_\mathfrak{l}$, such that the reduction
\[
\bar{\psi}: A \rightarrow k_\mathfrak{l}\{\tau\}
\]
is a Drinfeld module. The rank of $\bar{\psi}$ is called the \emph{reduction rank} of $\phi$ at $\mathfrak{l}$. If the reduction rank is $r$, then $\phi$ is said to have \emph{good reduction} at $\mathfrak{l}$.
\end{definition}
Now let $\fl, \mathfrak{p}\in \Omega_A$ be distinct primes and consider the associated Galois representation $\rho=\hat{\rho}_{\phi, \mathfrak{p}}: \op{G}_F\rightarrow \op{GL}_2(A_{ \mathfrak{p}})$ on the $\mathfrak{p}$-adic Tate-module $\mathbb{T}_{\mathfrak{p}}(\phi)$. Let $\op{I}_{\mathfrak{l}}$ be the inertia subgroup of $\op{G}_{F_{\mathfrak{l}}}$. Set $\rho_{|\mathfrak{l}}$ to denote the restriction of $\rho$ to $\op{G}_{F_{\mathfrak{l}}}$. Suppose that $\mathfrak{l}\neq \mathfrak{p}$ is a prime at which $\phi$ has good reduction. Then, $\rho$ is unramified at $\mathfrak{l}$, i.e., $\op{I}_{\mathfrak{l}}$ lies in the kernel of $\rho_{|\mathfrak{l}}$ (cf. \cite[section 6.1]{papibook}). 
\section{The Drinfeld--Tate uniformization}
\par In this section, we recall the Drinfeld--Tate uniformization. For a more detailed exposition, we refer to \cite[sections 5.1, 6.2 and 6.3]{papibook}. This will be used in this article to study the local structure of Galois representations at primes of stable reduction. Let $\mathfrak{l}\in \Omega_A$, set $K:=F_{\fl}$ and $\cO:=A_{\fl}$. Let $\mathbb{C}_K$ be the completion of the separable closure of $K$. Let $\mathbb{C}_K\llangle x \rrangle$ be the set of power series of the form 
\[f(x)=\sum_{n\geq 0} a_n x^{q^n}\]
with coefficients $a_n\in \mathbb{C}_K$. These power series are $\F_q$-linear, i.e., 
\[f(\alpha x+\beta y)=\alpha f(x)+\beta f(y),\] where $\alpha, \beta\in \F_q$ and indeterminates $x$ and $y$. On the other hand, let $\mathbb{C}_K\{\!\{ \tau\}\!\}$ be the ring of twisted power series $\mathbb{C}_K\{\!\{ \tau\}\!\}$ consisting of
\[f=\sum_{n=0}^\infty a_n \tau^n,\] where $a_n\in \mathbb{C}_K$. Addition is defined termise, and multiplication as follows
\[\left(\sum_{n=0}^\infty a_n \tau^n\right)\left(\sum_{n=0}^\infty b_n \tau^n\right)=\sum_{n=0}^\infty \left(\sum_{i=0}^n a_i b_{n-i}^{q^i}\right)\tau^n.\]
There is a natural isomorphism 
\[\mathbb{C}_K\{\!\{ \tau\}\!\}\xrightarrow{\sim} \mathbb{C}_K\llangle x \rrangle\] defined by \[f\mapsto f(x):=\sum_{n=0}^\infty a_n x^{q^n}.\]
Given a discrete $\F_q$-vector subspace of $\mathbb{C}_K$, let 
\begin{equation}\label{e Lambda definition} e_{\Lambda}(x):=x\prod_{\lambda\in \Lambda\backslash\{0\}} \left(1-\frac{x}{\lambda}\right).\end{equation} This is an entire function given by an $\F_q$-linear power series in $\mathbb{C}_K\llangle x \rrangle$ called the \emph{Carlitz-Drinfeld exponential} of $\Lambda$.
\par Let $\varphi: A\rightarrow K\{\tau\}$ be a Drinfeld module with good reduction. Assume without loss of generality that $\varphi$ takes values in $\cO$; we indicate this by simply writing $\varphi: A\rightarrow \cO\{\tau\}$. Let $\bar{\varphi}$ denote the reduction of $\varphi$. The Galois group $\op{G}_{K}$ acts on $^{\phi}K^{{\rm{sep}}}$ by $A$-linear automorphisms.

\begin{definition}A $\varphi$-lattice $\Lambda$ is a discrete, finitely generated and free $A$-submodule of $^{\phi}K^{{\rm{sep}}}$ and stable under $\op{G}_{K}$-action. The rank of $\Lambda$ is defined to be its rank as an $A$-module. A Drinfeld--Tate datum of rank $(r_1, r_2)$ is a pair $(\varphi, \Lambda)$, where $\varphi:A\rightarrow \cO\{\tau\}$ is a Drinfeld module of rank $r_1$ with good reduction and $\Lambda$ is a $\varphi$-lattice of rank $r_2$ over $A$. Two such datum $(\varphi, \Lambda)$ and $(\varphi', \Lambda')$ are isomorphic if there is an $\cO[\op{G}_K]$-isomorphism $\varphi\xrightarrow{\sim} \varphi'$ which induces an isomorphism of $A$-modules $\Lambda\xrightarrow{\sim} \Lambda'$. 
\end{definition}

\begin{theorem}[Drinfeld]\label{DT datum}
Given $\mathfrak{l}\in \Omega_A$ and a pair of positive integers $(r_1, r_2)$, there is a bijection between the following sets:
\begin{enumerate}
\item The set of isomorphism classes of Drinfeld modules $\phi: A\rightarrow \cO\{\tau\}$ of rank $r:=r_1+r_2$ with stable reduction and reduction rank $r_1$.
\item The set of isomorphism classes of $(\varphi,\Lambda)$ of rank $(r_1, r_2)$.
\end{enumerate}
\end{theorem}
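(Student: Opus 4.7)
The plan is to construct the bijection explicitly in both directions using the Carlitz--Drinfeld exponential, and then check compatibility with isomorphisms.

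For the direction from $(\varphi,\Lambda)$ to a Drinfeld module $\phi$ of rank $r=r_1+r_2$, I would take $e_\Lambda$ as in \eqref{e Lambda definition}. Discreteness of $\Lambda$ guarantees that $e_\Lambda$ converges everywhere and defines an entire $\F_q$-linear surjection $\mathbb{C}_K\to\mathbb{C}_K$ with kernel $\Lambda$. Since $\varphi_a(\Lambda)\subseteq\Lambda$ for every $a\in A$, there is a unique $\phi_a\in\mathbb{C}_K\{\tau\}$ satisfying $\phi_a\circ e_\Lambda=e_\Lambda\circ\varphi_a$. Comparing degrees of the two sides as $\F_q$-linear polynomials in $x$ yields $\op{deg}_\tau\phi_a=(r_1+r_2)\op{deg}_T a$, so $\phi$ has rank $r$. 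Galois stability of $\Lambda$ together with Galois equivariance of $e_\Lambda$ forces $\phi_a\in K\{\tau\}$, and absolute value estimates on the elements of $\Lambda$ refine this to $\phi_a\in\cO\{\tau\}$. Reducing the intertwining identity modulo $\mathfrak{m}_\fl$ identifies the reduction with $\bar\varphi$, so $\phi$ has stable reduction of reduction rank $r_1$.

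For the converse, given $\phi:A\to\cO\{\tau\}$ of rank $r$ with stable reduction of reduction rank $r_1$, I would first extract $\varphi$. A Newton polygon analysis of $\phi_T$ exhibits a factorization across the slope breaks determined by the reduction rank; the slope-zero factor defines a Drinfeld module $\varphi$ of rank $r_1$ with good reduction whose reduction is $\bar\phi$, and uniqueness up to isomorphism follows from the rigidity of lifts of good reduction data. The formal logarithm $\log_\phi$ converges on a small disk, and the central analytic step is to show it extends to an entire $\F_q$-linear function whose formal inverse $e:\mathbb{C}_K\to\mathbb{C}_K$ intertwines $\varphi_a$ with $\phi_a$ for every $a\in A$. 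Setting $\Lambda:=\ker e$, discreteness follows from the positive radius of convergence of $\log_\phi$ near $0$, while $\op{G}_K$-stability is built into the construction. Counting $\mathfrak{a}$-torsion through the short exact sequence $0\to\Lambda\to\mathbb{C}_K\xrightarrow{e}\mathbb{C}_K\to 0$ and comparing with $\phi[\mathfrak{a}]\cong(A/\mathfrak{a})^r$ forces $\op{rank}_A\Lambda=r-r_1=r_2$.

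To close the bijection, I would check that the two constructions are mutually inverse: starting from $(\varphi,\Lambda)$, the exponential built in the first step is canonically $e_\Lambda$, and applying the converse recovers $\varphi$ and $\Lambda$ by the uniqueness of $\phi_a$ satisfying the intertwining identity; starting from $\phi$, the first construction applied to $(\varphi,\ker e)$ returns $\phi$ for the same reason. Functoriality in isomorphisms is then automatic, since an isomorphism $\varphi\xrightarrow{\sim}\varphi'$ sending $\Lambda$ to $\Lambda'$ induces a corresponding isomorphism of the constructed $\phi$'s by transport. The main obstacle is the analytic construction of the entire exponential $e$ in the converse direction, and the verification that $\ker e$ is free of rank exactly $r_2$: this is where Newton polygon estimates, convergence bounds for the coefficients of $\phi_a$, and the Tate-module counting argument must be combined carefully. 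By contrast, the forward direction and the passage to isomorphism classes are largely formal once the exponential is in hand.
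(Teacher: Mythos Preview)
The paper does not actually prove this theorem: its entire proof is the sentence ``For a proof of the result, see \cite[Section 6.2]{papibook}.'' Your sketch is essentially the standard argument carried out in that reference (and in Drinfeld's original paper), so in substance you are aligned with what the paper defers to.

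One point in your converse direction deserves correction. The formal logarithm $\log_\phi$ does \emph{not} extend to an entire function on $\mathbb{C}_K$; it genuinely has only a finite radius of convergence, and there is nothing to invert globally. The entire intertwiner $e$ with $\phi_a\circ e=e\circ\varphi_a$ is constructed directly: once $\varphi$ has been extracted (your Newton-polygon step), one solves a coefficient recursion for $e\in K\{\!\{\tau\}\!\}$ with $\partial e=1$ and proves the coefficients tend to zero fast enough for $e$ to be entire. The lattice is then $\Lambda:=\ker e$, and your rank count via torsion is the right way to see $\op{rank}_A\Lambda=r_2$. Apart from this slip about the logarithm, your outline matches the standard proof.
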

\begin{proof}
For a proof of the result, see \cite[Section 6.2]{papibook}.
\end{proof}
The Galois representation of a Drinfeld module with stable reduction can be understood in terms of the associated Drinfeld--Tate datum, as the result below shows.
\begin{proposition}\label{Galois repn propn}
    Let $a\in A$ be a non-constant element and let $(\varphi, \Lambda)$ be Drinfeld--Tate datum of rank $(r_1, r_2)$ and let $\phi$ be the corresponding Drinfeld module of rank $r:=r_1+r_2$. Then, the following assertions hold:
    \begin{enumerate}
        \item there is a natural $G_{K}$-equivariant short exact sequence of $A$-modules:
\[0\longrightarrow \varphi[a]\longrightarrow \phi[a] \rightarrow \Lambda/a\Lambda \longrightarrow 0.\]
\item The exponential $e_\Lambda$ satisfies the relation 
\begin{equation}\label{e lambda equation}e_\Lambda(\phi_a(x))=\varphi_a (e_\Lambda(x))\end{equation}
and gives a Galois equivariant isomorphism of $A$-modules
\[e_\Lambda: \phi_a^{-1}\Lambda/\Lambda\xrightarrow{\sim} \varphi[a].\]
\item We have that
\[\phi_a(x)=ax\prod_{0\neq \pi\in \varphi_a^{-1}(\Lambda)/\Lambda} \left(1-\frac{x}{e_\Lambda(\pi)}\right).\]
    \end{enumerate}
    
\end{proposition}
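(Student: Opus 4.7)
My plan is to deduce the three assertions as interlocking consequences of the analytic uniformization provided by Theorem~\ref{DT datum}. The central object is the Carlitz--Drinfeld exponential $e_\Lambda:\mathbb{C}_K\to\mathbb{C}_K$: an $\F_q$-linear surjective entire function with kernel exactly $\Lambda$, realizing the rank-$r$ Drinfeld module as a quotient of the rank-$r_1$ one by $\Lambda$.

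I would begin with assertion (2). By Theorem~\ref{DT datum}, the rank-$r$ module $\phi$ is constructed from the datum $(\varphi,\Lambda)$ precisely so that $e_\Lambda$ becomes an $A$-equivariant quotient map; the intertwining identity \eqref{e lambda equation} is therefore essentially built into the construction. To verify it formally, one compares both sides as $\F_q$-linear entire functions in $x$: they share the same leading coefficient $a$ (forced by $\partial\varphi_a = \partial\phi_a = \gamma(a)$) and the same zero set, and therefore agree by the uniqueness of the Weierstrass expansion of an $\F_q$-linear entire function in terms of its zeros and leading coefficient. The identity then implies that $e_\Lambda$ carries the relevant preimage of $\Lambda$ under multiplication by $a$ into the $a$-torsion on the other side and vanishes on $\Lambda$, hence descends to a $\op{G}_K$-equivariant $A$-linear map between the displayed quotients. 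Injectivity is the identity $\ker e_\Lambda = \Lambda$, and surjectivity follows from a cardinality comparison using that $\Lambda$ is a free $A$-module of rank $r_2$, so that both sides are $A/(a)$-modules of the same order.

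Assertion (3) then falls out: $\phi_a$ is a separable twisted polynomial of degree $q^{r\deg a}$ with $\partial(\phi_a) = a$, so it factors uniquely as
\[
\phi_a(x) = a x \prod_{0\neq\alpha\in\phi[a]}\bigl(1 - x/\alpha\bigr),
\]
and substituting the parametrization of $\phi[a]$ by $\varphi_a^{-1}\Lambda/\Lambda$ via $e_\Lambda$ obtained in (2) yields the displayed product. Finally, assertion (1) is obtained by applying the snake lemma to the short exact sequence
\[
0 \longrightarrow \Lambda \longrightarrow \mathbb{C}_K \xrightarrow{\ e_\Lambda\ } \mathbb{C}_K \longrightarrow 0
\]
of $\op{G}_K$-modules equipped with the multiplication-by-$a$ endomorphism on each term (restricted from $\varphi_a$ on the left and middle, and $\phi_a$ on the right, via \eqref{e lambda equation}): freeness of $\Lambda$ over $A$ kills the leftmost kernel, and algebraic closedness of $\mathbb{C}_K$ kills the outer cokernels, so the six-term snake sequence collapses precisely to $0 \to \varphi[a] \to \phi[a] \to \Lambda/a\Lambda \to 0$.

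The main technical obstacle is the intertwining identity \eqref{e lambda equation}, which is not formal but rests on the analytic uniformization theory developed in \cite[Sections 6.2--6.3]{papibook}; once that identity is granted, assertions (1) and (3) reduce to routine exact-sequence chasing and polynomial bookkeeping.
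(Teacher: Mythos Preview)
Your sketch is correct and simply unpacks what the paper defers to \cite{papibook} for: the snake-lemma derivation of (1) from the exponential sequence and the root-factorization for (3) via the bijection in (2) are exactly the standard arguments found there. One small caveat worth flagging: the intertwining you actually use in your snake-lemma diagram is $\phi_a\circ e_\Lambda = e_\Lambda\circ\varphi_a$ (with $\varphi$ acting on the source and $\phi$ on the target of $e_\Lambda$), which is the correct relation and the one consistent with parts (1), (3), and the later applications in the paper; the displayed equation~\eqref{e lambda equation} has $\varphi$ and $\phi$ interchanged, as does the isomorphism $\phi_a^{-1}\Lambda/\Lambda\xrightarrow{\sim}\varphi[a]$---these appear to be typos in the statement, since the paper itself uses $e_\Lambda:\varphi_a^{-1}\Lambda/\Lambda\xrightarrow{\sim}\phi[a]$ immediately afterward.
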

\begin{proof}
See \cite[pp.355--356]{papibook} for the proofs of (1) and (2). Part (3) follows from \cite[Lemma 5.1.4 and Proposition 6.2.6]{papibook}.
\end{proof}
Consider the special case when $r_2=1$. Let $\gamma$ be a generator of $\Lambda$, i.e., 
\[\Lambda=\{\phi_a(\gamma)\mid a\in A\}.\]We then find that 
\begin{equation}
    e_\Lambda(x)=x\prod_{0\neq a\in A} \left(1-\frac{x}{\phi_a(\gamma)}\right).
\end{equation}
Then we have that 
\[\frac{e_\Lambda(x)}{x}=\sum_{i=0}^\infty a_i x^{q^i-1}, \] where for $i>0$, \begin{equation}\label{equation for a_i}a_i=(-1)^i\sum_{a_1, \dots, a_{q^i-1}\neq 0} \frac{1}{\phi_{a_1}(\gamma)\dots \phi_{a_{q^i-1}}(\gamma)}.\end{equation}

\section{Counting Drinfeld modules}\label{s 4}

\par We fix a finite field $\F_q$ and an integer $r\geq 2$. Let $|\cdot|_\infty$ be the absolute value at $\infty$, defined by $|a|=q^{\op{deg}_T(a)}$. Given $\vec{g}=(g_1, \dots, g_r) \in A^r$, we have the associated Drinfeld module defined by
\[\phi_T^{\vec{g}}=T+g_1\tau+g_2\tau^2+\dots+ g_r \tau^r. \]
We set $|\vec{g}|:=\op{max}\{|g_1|_{\infty} , \dots, |g_r|_{\infty}\}$, and for an integer $X>0$, set \[\cC_r(X):=\{\vec{g}=(g_1, \dots, g_r)\in A^r\mid g_r\neq 0, |\vec{g}|< q^X\}.\]
We note that the condition $|\vec{g}|<q^X$ can also be rephrased as $\op{deg}_T g_i< X$ for all $i=1, \dots, r$. It is easy to see that 
\begin{equation}\label{cC(X) count}\# \cC_r(X)=q^{rX}-q^{(r-1)X}.\end{equation}
Let $\cC_r$ be the set of all tuples $\vec{g}=(g_1, \dots, g_r)\in A^r$ such that $g_r\neq 0$. For $S$ a subset, we set \[S(X):=S\cap \cC_r(X)=\left\{\vec{g}\in S\mid |\vec{g}|< q^X\right\}.\] 
\begin{definition}
    The density of $S$ is defined as the following limit
    \[\mathfrak{d}(S):=\lim_{X\rightarrow \infty} \frac{\# S(X)}{\# \cC_r(X)},\]
    provided it exists. The \emph{upper} density $\overline{\mathfrak{d}}(S)$ (resp. \emph{lower} density $\underline{\mathfrak{d}}(S)$) is defined as above, upon replacing the limit with $\limsup$ (resp. $\liminf$). 
\end{definition}
Note that although $\mathfrak{d}(S)$ may not exist, the upper and lower densities $\overline{\mathfrak{d}}(S)$ and $\underline{\mathfrak{d}}(S)$ must exist. We say that $S$ has \emph{positive density} if $\underline{\mathfrak{d}}(S)>0$.

\par Let $\cF_r\subset \cC_r$ be the set vectors $\vec{g}=(g_1, \dots, g_r)\in A^r$ such that $g_r\neq 0$ and the $T$-adic Galois representation 
\[\rho^{\vec{g}}:=\hat{\rho}_{\phi^{\vec{g}}, T}: \op{G}_F\rightarrow \op{GL}_r\left(\F_q\llbracket T\rrbracket \right) \] is surjective. We shall show that $\mathfrak{d}(\cF_r)=1$, i.e., most Drinfeld modules have surjective $T$-adic Galois representation. Let $\bar{\rho}^{\vec{g}}:\op{G}_F\rightarrow \op{GL}_r(\F_q)$ be the mod-$T$ reduction of $\rho^{\vec{g}}$.
\par Let $\op{M}_r(\F_q)$ be the group of $r\times r$ matrices with entries in $\F_q$. Consider the following natural filtration on $G:=\op{GL}_r\left(\F_q\llbracket T\rrbracket \right)$ defined by $G^0:=G$, $G^i:=\op{Id}+T^i \op{M}_r(\F_q\llbracket T\rrbracket)$. We note that $G^{i+1}\subset G^i$ is a normal subgroup and 
\[G^{[i]}:=G^i/G^{i+1}\simeq \begin{cases}
    \op{GL}_r(\F_q) & \text{ if } i=0;\\
    \op{M}_r(\F_q) & \text{ if } i>0.\\
\end{cases}\]
Let $\mathcal{H}$ be a closed subgroup of $\op{GL}_r(\F_q\llbracket T\rrbracket)$, take $\mathcal{H}^i:=\mathcal{H}\cap G^i$, and $\mathcal{H}^{[i]}:=\mathcal{H}^i/\mathcal{H}^{i+1}$. The following criterion of Pink and R\"utsche gives a criterion for $\cH$ to equal $\op{GL}_r(\F_q\llbracket T \rrbracket)$.
\begin{proposition}\label{PR prop}
    Let $\mathcal{H}$ be a closed subgroup of $\op{GL}_r\left(\F_q\llbracket T \rrbracket\right)$ and assume that $q\geq 4$. Furthermore, suppose that the following conditions hold
    \begin{enumerate}
        \item $\mathcal{H}^{[0]}=\op{GL}_r\left(\F_q\right)$, 
          \item $\op{det}(\mathcal{H})=\F_q\llbracket T \rrbracket^\times$,
           \item $\mathcal{H}^{[1]}$ contains a non-scalar matrix.
    \end{enumerate}
    Then one has that $\mathcal{H}=\op{GL}_r(\F_q\llbracket T \rrbracket)$.
\end{proposition}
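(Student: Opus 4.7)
The plan is to exploit the profinite structure $G:=\op{GL}_r(\F_q\llbracket T\rrbracket) = \varprojlim_n G/G^n$. Since $\mathcal{H}$ is closed, it suffices to show that $\mathcal{H}$ surjects onto $G/G^n$ for every $n\geq 1$, which I prove by induction on $n$. The base case $n=1$ is precisely hypothesis (1). The inductive step ``$\mathcal{H}\cdot G^n = G \Rightarrow \mathcal{H}\cdot G^{n+1}=G$'' is equivalent, via a standard comparison of successive quotients, to showing that the graded piece $\mathcal{H}^{[n]}$ exhausts all of $G^{[n]}\cong \op{M}_r(\F_q)$.

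Two structural ingredients drive the induction. First, because $\mathcal{H}$ surjects onto $G^{[0]}=\op{GL}_r(\F_q)$ by (1), each graded piece $\mathcal{H}^{[n]}\subseteq \op{M}_r(\F_q)$ is stable under the conjugation action of $\op{GL}_r(\F_q)$ on $\op{M}_r(\F_q)$, and is hence a $\op{GL}_r(\F_q)$-invariant additive subgroup. Second, the group commutator on $G$ descends to a Lie bracket on graded pieces,
\[[\op{Id}+T^a A,\ \op{Id}+T^b B]\ \equiv\ \op{Id}+T^{a+b}[A,B]\ \pmod{T^{a+b+1}},\]
satisfying $[\mathcal{H}^{[a]},\mathcal{H}^{[b]}]\subseteq \mathcal{H}^{[a+b]}$. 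Moreover, the determinant homomorphism $G\to \F_q\llbracket T\rrbracket^\times$ induces at each graded piece the trace map $\op{tr}:\op{M}_r(\F_q)\to \F_q$, and hypothesis (2), combined with the inductive surjectivity onto $G/G^n$, yields $\op{tr}(\mathcal{H}^{[n]})=\F_q$ for every $n\geq 1$.

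With these tools in hand, I first treat the crucial base case $n=1$ of the secondary induction. By (3) there is a non-scalar $M\in \mathcal{H}^{[1]}$. I argue that any $\op{GL}_r(\F_q)$-invariant additive subgroup of $\op{M}_r(\F_q)$ containing a non-scalar matrix must contain all of $\mathfrak{sl}_r(\F_q)$; combined with $\op{tr}(\mathcal{H}^{[1]})=\F_q$ from (2), this forces $\mathcal{H}^{[1]}=\op{M}_r(\F_q)$. For $n\geq 2$, the bracket identity together with the inductive hypothesis $\mathcal{H}^{[1]}=\mathcal{H}^{[n-1]}=\op{M}_r(\F_q)$ yields
\[\mathfrak{sl}_r(\F_q) = [\op{M}_r(\F_q),\op{M}_r(\F_q)] \subseteq \mathcal{H}^{[n]},\]
and the trace surjectivity from (2) supplies the complementary scalar direction, completing the equality $\mathcal{H}^{[n]}=\op{M}_r(\F_q)$.

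The main obstacle is the representation-theoretic input needed in the $n=1$ step: namely, that any $\op{GL}_r(\F_q)$-invariant $\F_p$-subspace of $\op{M}_r(\F_q)$ which properly contains the scalar matrices must contain $\mathfrak{sl}_r(\F_q)$. Over $\F_q$ this is a standard consequence of the irreducibility of the adjoint representation modulo its centre, but the subgroups appearing here are only $\F_p$-linear, so one must rule out sub-representations arising from non-trivial Galois twists. This is precisely where the hypothesis $q\geq 4$ is used (together with the standing assumption that $p$ is odd), and a careful extraction of this modular representation-theoretic lemma is the crux of the argument.
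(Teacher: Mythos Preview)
The paper does not prove this proposition at all: its entire proof is the one-line citation ``This result is \cite[Proposition 4.1]{pinkrutsche}.'' You have instead sketched the actual Pink--R\"utsche argument, and your outline is faithful to their strategy: reduce to showing $\mathcal{H}^{[n]}=\op{M}_r(\F_q)$ for all $n\ge 1$ via the conjugation-invariance of $\mathcal{H}^{[n]}$, the Lie-bracket identity on graded pieces, and the determinant/trace link coming from hypothesis~(2). Your identification of the key modular representation-theoretic lemma (classifying $\op{GL}_r(\F_q)$-invariant $\F_p$-subspaces of $\op{M}_r(\F_q)$, with special care when $p\mid r$ so that the scalars sit inside $\mathfrak{sl}_r$) is exactly the crux in Pink--R\"utsche.

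One step in your sketch deserves more justification than you give it: the assertion that hypothesis~(2) together with surjectivity onto $G/G^n$ forces $\op{tr}(\mathcal{H}^{[n]})=\F_q$. This is not a formal consequence of the two surjectivity statements. The clean way to see it is to observe that the obstruction is a homomorphism $\op{SL}_r(A/T^n)\to \F_q/\op{tr}(\mathcal{H}^{[n]})$ to an abelian group, which vanishes because $\op{SL}_r(A/T^n)$ is perfect for $q\ge 4$ (this uses $[\mathfrak{sl}_r,\mathfrak{sl}_r]=\mathfrak{sl}_r$ for $p$ odd and a short coinvariant computation). Pink--R\"utsche handle this and the $p\mid r$ subtleties in their Lemmas; your sketch correctly flags the latter as the main obstacle but glosses over the former.
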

\begin{proof}
    This result is \cite[Proposition 4.1]{pinkrutsche}.
\end{proof}

We shall apply Proposition \ref{PR prop} to $\cH(\vec{g}):=\op{image}\left(\rho^{\vec{g}}\right)$. We identify $\mathcal{H}(\vec{g})^{[0]}$ with the image of $\bar{\rho}^{\vec{g}}$.
\begin{definition}
    We set $\cC_r'$ to be the set of $\vec{g}\in \cC_r$ for which the following conditions are satisfied
\begin{itemize}
   \item $\bar{\rho}^{\vec{g}}$ is surjective, 
          \item $\op{det}\rho^{\vec{g}}$ is surjective.
\end{itemize}
We take $\cC_r''$ to be the set of $\vec{g}\in \cC_r$ for which $\mathcal{H}(\vec{g})^{[1]}$ contains a non-scalar matrix.
\end{definition}

Proposition \ref{PR prop} asserts that 
\begin{equation}\label{set intersection property}
    \cF_r=\cC_r' \cap \cC_r'',
\end{equation}
and thus in order to show that $\mathfrak{d}(\cF_r)=1$, it suffices to show that $\mathfrak{d}(\cC_r')=1$ and $\mathfrak{d}(\cC_r'')=1$.

\par Let $\Delta$ be a non-zero element in $A$ and consider the associated Drinfeld module $\phi^\Delta$ of rank $1$ defined by $\phi_T^\Delta=T+\Delta \tau$. Let $\rho^\Delta$ denote the $T$-adic Galois representation \[\rho^\Delta=\hat{\rho}_{\phi^\Delta, T}:\op{G}_F\rightarrow \F_q\llbracket T\rrbracket^\times=\op{GL}_1\left(\F_q\llbracket T\rrbracket\right)\]
associated to $\phi^\Delta$. 
\begin{theorem}[Gekeler]\label{gekeler index thm}
    With respect to notation above, the index \[c_{\Delta}:=\left[\F_q\llbracket T\rrbracket^\times: \op{image}(\rho^\Delta)\right]\] is finite and is a divisor of $(q-1)$. 
\end{theorem}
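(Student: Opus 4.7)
The plan is to compare $\phi^\Delta$ to the Carlitz module $C$ defined by $C_T = T+\tau$, and to transport the surjectivity of the Carlitz $T$-adic representation through a cyclotomic twist. I would first record the classical Carlitz--Hayes cyclotomic theory: $C[T^n] \cong A/T^n$ as Galois modules with $\op{Gal}(F(C[T^n])/F) \cong (A/T^n)^\times$, and passing to the inverse limit identifies
\[ \rho_C \colon \op{G}_F \twoheadrightarrow \varprojlim_n (A/T^n)^\times = \F_q\llbracket T \rrbracket^\times \]
as a surjection. This is the only substantive input of the argument.

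Next, I would produce an isomorphism $u\colon C\to\phi^\Delta$ over a small extension of $F$. The relation $u\cdot C_T = \phi^\Delta_T\cdot u$ in $\bar F\{\tau\}$ for a scalar $u\in\bar F^\times$ reduces to $u^{q-1}=\Delta^{-1}$. Since $q$ is a power of $p$, one has $p\nmid q-1$, so a separable such $u$ exists in $F^{\op{sep}}$, and $F(u)/F$ is Kummer of degree dividing $q-1$. Multiplication by $u$ gives an $A$-linear bijection $\mathbb{T}_T(C)\xrightarrow{\sim}\mathbb{T}_T(\phi^\Delta)$; fixing a basis on the left and taking its $u$-image on the right, a one-line cocycle computation yields
\[ \rho^\Delta(\sigma) = \chi(\sigma)\,\rho_C(\sigma) \quad \text{for all } \sigma\in\op{G}_F, \]
where $\chi(\sigma):=\sigma(u)/u$ is the Kummer character attached to $u$. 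Since $\chi(\sigma)^{q-1} = \sigma(\Delta^{-1})/\Delta^{-1} = 1$, the character $\chi$ takes values in $\F_q^\times \hookrightarrow \F_q\llbracket T\rrbracket^\times$ and factors through $\op{Gal}(F(u)/F)$, hence has order dividing $q-1$.

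To conclude, I would set $H:=\ker\chi\subseteq\op{G}_F$, which has index dividing $q-1$. On $H$ the two representations coincide, so $\rho^\Delta(H)=\rho_C(H)$, and by surjectivity of $\rho_C$ the subgroup $\rho_C(H)$ has index in $\F_q\llbracket T\rrbracket^\times$ dividing $[\op{G}_F:H]$, hence dividing $q-1$. Since $\rho^\Delta(\op{G}_F)\supseteq\rho^\Delta(H)$, the index $c_\Delta$ divides $q-1$ as required. The surjectivity of $\rho_C$ is the one ingredient with serious content; the remainder of the argument is formal, and the only piece requiring any care is the cocycle bookkeeping that produces the twist relation $\rho^\Delta = \chi\cdot \rho_C$ once the Tate modules are identified via $u$.
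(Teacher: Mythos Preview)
Your argument is correct. The paper itself does not give a proof: it simply cites \cite[Corollary 1.2]{Gekelertwistedcarlitz}. What you have written is a self-contained proof of that cited result, and the title of Gekeler's paper (``twisted Carlitz'') strongly suggests your approach---realizing $\phi^\Delta$ as a Kummer twist of the Carlitz module $C$ by a $(q-1)$-th root of $\Delta^{-1}$---is exactly the argument behind the reference. The cocycle computation producing $\rho^\Delta = \chi \cdot \rho_C$ with $\chi$ valued in $\mu_{q-1} = \F_q^\times$ is clean and correct, and your index-chasing via $H = \ker\chi$ is the right way to finish. So rather than a different route, you have unpacked the black box the paper invokes; the only external input you rely on is the surjectivity of $\rho_C$, which is classical Carlitz--Hayes cyclotomic theory.
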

\begin{proof}
    The result follows from \cite[Corollary 1.2]{Gekelertwistedcarlitz}.
\end{proof}

\begin{lemma}\label{detsurjlemma}
    With respect to notation above assume that $\bar{\rho}^{\vec{g}}$ is surjective, then $\vec{g}\in \cC_r'$.
\end{lemma}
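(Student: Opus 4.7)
The plan is to combine the Weil pairing with Gekeler's index theorem (Theorem \ref{gekeler index thm}) and the pro-$p$ structure of $1 + T\F_q\llbracket T\rrbracket$ in order to lift surjectivity of the determinant character from the residue level to the $T$-adic level.

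First, by the Weil pairing identity $\det\rho_{\phi,\mathfrak{a}} = \rho_{\psi,\mathfrak{a}}$ recalled in Section 2 (with $\psi$ the rank-one Drinfeld module given by $\psi_T = T + (-1)^{r-1}g_r \tau$), one identifies the $T$-adic determinant character $\det\rho^{\vec g}$ with the rank-one representation $\rho^{\Delta}$ attached to the Drinfeld module $\phi^{\Delta}$, where $\Delta := (-1)^{r-1} g_r$. Its mod-$T$ reduction coincides with $\det\bar{\rho}^{\vec g}$, which by hypothesis is surjective onto $\F_q^{\times}$.

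The key structural input is the decomposition
\[
\F_q\llbracket T\rrbracket^{\times} \;=\; \F_q^{\times} \times \bigl(1 + T\F_q\llbracket T\rrbracket\bigr),
\]
in which the second factor $U^{(1)} := 1 + T\F_q\llbracket T\rrbracket$ is pro-$p$. Theorem \ref{gekeler index thm} gives $[\F_q\llbracket T\rrbracket^{\times} : \op{image}(\rho^{\Delta})] = c_{\Delta} \mid (q-1)$. Since $\gcd(p, q-1) = 1$ and $U^{(1)}$ is the unique Sylow pro-$p$ subgroup of $\F_q\llbracket T\rrbracket^{\times}$, any subgroup of index dividing $q-1$ must contain $U^{(1)}$. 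Hence $\op{image}(\rho^{\Delta}) \supseteq U^{(1)}$, and therefore $\rho^{\Delta}$ is surjective if and only if its mod-$T$ reduction is. The latter holds by assumption, so $\det\rho^{\vec g}$ is surjective onto $\F_q\llbracket T\rrbracket^{\times}$, which is the determinant clause in the definition of $\cC_r'$.

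I do not foresee a serious obstacle: the whole argument amounts to a pro-$p$ Nakayama-style lift resting on Gekeler's theorem, and the only subtlety is tracking the sign convention from the Weil pairing when identifying the rank-one Drinfeld module governing $\det\rho^{\vec g}$. I note for completeness that the other clause in the definition of $\cC_r'$, namely surjectivity of $\bar{\rho}^{\vec g}$ itself, cannot follow from surjectivity of $\det\bar{\rho}^{\vec g}$ alone when $r \geq 2$; I read this either as implicit in the running hypothesis of the lemma or as a mild notational abuse, and if needed I would simply add it explicitly to the statement, the determinant lift carried out above being the genuine content of the argument.
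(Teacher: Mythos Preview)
Your argument is correct and follows essentially the same route as the paper: identify $\det\rho^{\vec g}$ with $\rho^{\Delta}$ via the Weil pairing, use that $1+T\F_q\llbracket T\rrbracket$ is pro-$p$ together with Gekeler's divisibility $c_\Delta\mid(q-1)$ to force $c_\Delta=1$ once the mod-$T$ reduction is surjective. Your observation about the first clause of $\cC_r'$ is also apt: the paper's proof likewise only establishes the determinant condition, and in the subsequent application (Proposition~\ref{cc prime =1}) the lemma is in fact invoked under the stronger hypothesis that $\bar\rho^{\vec g}$ itself is surjective, so your reading of the intended statement is exactly right.
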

\begin{proof}
    By the Weil pairing, $\rho^{\Delta}=\op{det} \rho^{\vec{g}}$, where $\Delta:=(-1)^{r-1} g_r$. Note that the kernel of the reduction mod-$T$ map 
    \[\pi_T:\F_q\llbracket T\rrbracket^\times\rightarrow \left(\F_q\llbracket T\rrbracket/(T)\right)^\times=\F_q^\times\] is $1+T \F_q\llbracket T\rrbracket$. For $i\geq 1$, the association $a\mapsto 1+T^i a$ gives rise to an isomorphism 
    \[\F_q\xrightarrow{\sim} \frac{(1+T^i\F_q\llbracket T \rrbracket)}{(1+T^{i+1}\F_q\llbracket T \rrbracket)}.\] In particular, $\op{ker}\pi_T$ is a pro-$p$ group. Since $\bar{\rho}^{\Delta}$ is surjective by assumption, it follows that the index $c_\Delta=\left[\F_q\llbracket T\rrbracket^\times: \op{image}(\rho^\Delta)\right]$ is of $p$-power order. Theorem \ref{gekeler index thm} then implies that $\rho^{\Delta}$ is surjective. Thus it follows that $\vec{g}$ satisfies the conditions defining $\cC_r'$.
\end{proof}

\begin{proposition}\label{cc prime =1}
    With respect to notation above, we have that $\mathfrak{d}(\cC_r')=1$.
\end{proposition}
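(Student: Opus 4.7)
The plan is to reduce the proposition to a generic surjectivity statement for the mod-$T$ representation of the universal rank-$r$ Drinfeld module, and then to invoke Hilbert irreducibility over $F$. As a first step, I would use (the proof of) Lemma \ref{detsurjlemma} to observe that if $\bar{\rho}^{\vec{g}}$ is surjective, then $\op{det}\bar{\rho}^{\vec{g}}$ is automatically surjective, and hence $\op{det}\rho^{\vec{g}}$ is surjective. Consequently the set
\[
S := \{\vec{g}\in\cC_r : \bar{\rho}^{\vec{g}} \text{ is surjective}\}
\]
is contained in $\cC_r'$, and it suffices to prove $\mathfrak{d}(S)=1$.

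Next, I would introduce the universal rank-$r$ Drinfeld module over the purely transcendental extension $L := F(X_1,\dots,X_r)$, defined by
\[
\Phi_T := T + X_1\tau + X_2\tau^2 + \dots + X_r\tau^r,
\]
where $X_1,\dots,X_r$ are algebraically independent variables. Let $M := L(\Phi[T])$ denote the associated $T$-torsion field; the mod-$T$ representation produces an injection $\bar{\rho}^{\Phi}:\op{Gal}(M/L)\hookrightarrow \op{GL}_r(\F_q)$. The key intermediate claim is that this inclusion is an equality (``generic surjectivity''). For $r=2$ this is essentially the analysis carried out in \cite{raytadic}; for general $r$, I would arrange the argument by combining (i) surjectivity of the determinant $\op{det}\bar{\rho}^{\Phi}$ onto $\F_q^{\times}$, which follows from the Weil pairing and a direct computation applied to the rank-$1$ module determined by $X_r\tau$, with (ii) a specialization/monodromy argument producing enough transvections to generate $\op{SL}_r(\F_q)$ inside the generic Galois image.

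The final step is to transfer the generic statement into a density-$1$ statement via the Hilbert irreducibility theorem over the rational function field \cite[Corollary 3.5]{HITFF}, applied to the extension $M/L$. Every specialization $(X_1,\dots,X_r)\mapsto (g_1,\dots,g_r)\in A^r$ with $g_r\neq 0$ recovers the Drinfeld module $\phi^{\vec{g}}$ and realizes the specialized extension as $F(\phi^{\vec{g}}[T])/F$. The cited form of Hilbert irreducibility shows that, outside a thin subset of $A^r$, the specialized Galois group coincides with the generic Galois group $\op{GL}_r(\F_q)$; because the statement is compatible with the height $|\vec{g}|$, thin subsets of $A^r$ have density zero in the sense of section \ref{s 4}. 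This yields $\mathfrak{d}(S)=1$, and hence $\mathfrak{d}(\cC_r')=1$.

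The principal technical obstacle is the verification of generic surjectivity $\op{Gal}(M/L)=\op{GL}_r(\F_q)$ for all $r\geq 2$; once this input is secured, the passage from generic surjectivity to density one via Hilbert irreducibility is essentially routine.
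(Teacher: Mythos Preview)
Your proposal is correct and follows essentially the same route as the paper: reduce via Lemma \ref{detsurjlemma} to showing that $\bar{\rho}^{\vec{g}}$ is surjective for a density-$1$ set, then invoke Hilbert irreducibility over the rational function field \cite[Corollary 3.5]{HITFF}. The paper is simply more terse, outsourcing the generic-surjectivity input and the specialization argument to the proof of \cite[Theorem 3]{Galoisadd}, whereas you unpack that reference by introducing the universal Drinfeld module over $F(X_1,\dots,X_r)$ and isolating generic surjectivity of its mod-$T$ Galois group as the key technical input; both arguments ultimately rest on the same ingredients.
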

\begin{proof}
\par The result follows from the proof of \cite[Theorem 3]{Galoisadd} and we provide details for the benefit of exposition. Consider the generic polynomial
\[F(x)=Tx+G_1x^q+G_2x^{q^2}+\dots+ G_r x^{q^r}\in \F_q(T, G_1,\dots, G_r)[x].\] Since $F'(x)=T\neq 0$, this polynomial is separable. Let \[Z(F)\subset \overline{\F_q(T, G_1,\dots, G_r)}\] be the set of zeros of $F$, which forms an $\F_q$-vector space of dimension $r$. This is evident since $F(x)$ is an $\F_q$-linear polynomial. Pick a basis $e_1,\dots, e_r$ of $Z(F)$. The splitting field of $F$ over $\F_q(T, G_1,\dots, G_r)$ is $L:=\F_q(T, e_1, \dots, e_r)$ and has transcendence degree $r$ over $\F_q(T)$. Hence, $e_1, \dots, e_r$ are algebraically independent over $\F_q(T)$. Thus, any element in $\op{GL}_r(\F_q)$ gives rise to an element of $\op{Gal}(L/\F_q(T))$, via the linear action on $e_1, \dots, e_r$. Since $\op{GL}_r(\F_q)$ permutes the elements $e_1, \dots, e_r$, it follows that 
\[\op{GL}_r(\F_q)\simeq \op{Gal}(L/\F_q(T)).\] Via the permutation action, we simply identify $\op{Gal}(L/\F_q(T))$ with $\op{GL}_r(\F_q)$. Given a tuple $\vec{g}=(g_1, \dots, g_r)\mathcal{C}_r(X)$, one has a polynomial $F_{\vec{g}}(x)=Tx+g_1x^q+\dots +g_r x^{q^r}\in \F_q(T)[x]$. Let $G(\vec{g})\subset \op{GL}_r(\F_q)$ be its Galois group and denote by $B_r(X)$ the number of $\vec{g}\in \mathcal{C}_r(X)$ such that $G(\vec{g})\neq  \op{GL}_r(\F_q)$, or equivalently, $\bar{\rho}^{\vec{g}}$ is not surjective. Then, according to \cite[Corollary 3.5]{HITFF}, one finds that 
\[\frac{B_r(X)}{q^{rX}}=O(Xq^{-X/2}),\] a quantity which goes to $0$ as $X\rightarrow \infty$. Lemma \ref{detsurjlemma} then implies that if $\bar{\rho}^{\vec{g}}$ is surjective, then $\vec{g}\in \cC_r'$. Thus it follows that \[\mathfrak{d}(\cC_r')\leq 1-\lim_{X\rightarrow \infty} \frac{B_r(X)}{\#\mathcal{C}_r(X)}= 1-\lim_{X\rightarrow \infty} \frac{B_r(X)}{q^{rX}(1-q^{-X})}=1.\]
\end{proof}

In section \ref{s 6}, we show that $\mathfrak{d}(\cC_r'')=1$.

\section{Congruence conditions}
For $\vec{g}\in \cC_r$, recall that $\cH(\vec{g})$ is the image of $\rho^{\vec{g}}$. Note that \[\cH(\vec{g})^i=\rho^{\vec{g}}\left(\op{G}_{F(\phi[T^i])}\right)\] and it is easy to see that $\rho^{\vec{g}}$ induces an isomorphism 
\[\op{Gal}\left(F(\phi[T^{i+1}])/F(\phi[T^i])\right)\xrightarrow{\sim} \cH(\vec{g})^{[i]}.\] 
The set $\cC_r''$ was defined to consist of those $\vec{g}\in \cC_r$ for which $\cH(\vec{g})^{[1]}$ contains a non-scalar element. We show that this set can be understood in terms of congruence conditions at all primes $\fl\in \Omega_A$ such that $\fl\neq (T)$. 
\begin{definition}Suppose $r\geq 2$ and let $\Pi_r$ consist of $\vec{g}\in \cC_r$ such that there is a prime $\fl\in \Omega_A$ with $\fl\neq (T)$, such that the following conditions hold for $v:=v_\fl$: 
\begin{itemize}
    \item $v(g_{r-1})=0$, 
    \item $p\nmid v(g_{r})$.
\end{itemize}
\end{definition}
We show in this section that $\Pi_r$ is contained in $\cC_r''$ and in the next section, it is shown that $\Pi_r$ has density $1$. Some of the constructions are inspired by arguments of Zywina \cite{zywina2011drinfeld} and Chen \cite{Chen2}. For $\vec{g}\in \Pi_r$, and $\fl$ a prime for which the valuations of $g_{r-1}$ and $g_r$ are given as above, it is easy to see that $\phi^{\vec{g}}$ has stable reduction at $\mathfrak{l}$ with reduction rank $(r-1)$. We set $v:=v_{\fl}$ throughout. By Theorem \ref{DT datum}, there is a Drinfeld-Tate datum $(\varphi, \Lambda)$ that corresponds to $\phi_{\mathfrak{l}}=\phi_{\mathfrak{l}}^{\vec{g}}$ (the completion of $\phi^{\vec{g}}$ at $\fl$). Here, $\varphi$ has good reduction and rank $(r-1)$ and $\Lambda$ is a $\varphi$-lattice of rank $1$. Proposition \ref{Galois repn propn} implies that for all $n\geq 1$, there is a short exact sequence of $\op{G}_{F_{\mathfrak{l}}}$-modules
\begin{equation}\label{T^n exact sequence}0\longrightarrow \varphi[T^n]\longrightarrow \phi[T^n] \xrightarrow{\varphi_{T^n}}\Lambda/T^n \Lambda \longrightarrow 0.\end{equation}
We note that since $\varphi$ has good reduction at $\fl\neq (T)$, the action of $\op{G}_{F_{\fl}}$ on $\varphi[T^n]$ is unramified. Taking the inverse limit with respect to multiplication by $T$ maps, we obtain the following exact sequence of $A_{(T)}[\op{G}_{F_{\fl}}]$-modules
\begin{equation}\label{T t exact sequence}0\rightarrow \mathbb{T}_{(T)}(\varphi) \rightarrow \mathbb{T}_{(T)}(\phi)\rightarrow \Lambda\otimes_{A} A_{(T)}\rightarrow 0.\end{equation}
Set $\rho$ to denote the $T$-adic Galois representation $\rho^{\vec{g}}$ (associated to $\phi^{\vec{g}}$). For $n\geq 1$, denote by $\rho_n:\op{G}_F\rightarrow \op{GL}_r\left(A/(T^n)\right)$ the reduction of $\rho$ modulo $(T^n)$. The inertia group acts trivially on $\mathbb{T}_{(T)}(\varphi)$. It follows from \eqref{T t exact sequence} that $\rho_{|\op{I}_{\mathfrak{l}}}$ consists of matrices of the form 
\[\left(\begin{array}{ccccc}1 & 0 & \cdots & 0 & \ast \\ & \ddots &  & \vdots & \vdots \\ &  & \ddots & 0 & \ast \\ &  &  & 1 &  \ast \\ &  &  & &  \chi. \end{array}\right)\]
 Here, \[\chi:\op{I}_{\fl}\rightarrow \op{Aut}(\Lambda\otimes_{A} A_{(T)})\xrightarrow{\sim} A_{(T)}^\times\] coincides with the determinant of $\rho_{|\op{I}_{\mathfrak{l}}}$.

\par Let $\cU$ (resp. $\cU_n$) be the subgroup of $\op{GL}_r(A_{(T)})$ (resp. $\op{GL}_r(A/(T^n)$) consisting of matrices of the form
\[\left(\begin{array}{ccccc}1 & 0 & \cdots & 0 & \ast \\ & \ddots &  & \vdots & \vdots \\ &  & \ddots & 0 & \ast \\ &  &  & 1 &  \ast \\ &  &  & &  \ast. \end{array}\right)\] Observe that the association
\[\left((m_1, \dots, m_{r-1}), m_r\right)\mapsto \left(\begin{array}{ccccc}1 & 0 & \cdots & 0 & m_1 \\ & \ddots &  & \vdots & \vdots \\ &  & \ddots & 0 & m_{r-2} \\ &  &  & 1 &  m_{r-1} \\ &  &  & &  m_r. \end{array}\right)\]
defines an isomorphism
\[\Phi_n: \left(A/(T^n)\right)^{r-1}\times \left(A/(T^n)\right)^\times \xrightarrow{\sim} \cU_n.\]
Let $\mathcal{W}$ defined as the kernel of the mod-$T$ reduction map $\cU_2\xrightarrow{\op{mod}_T} \cU_1$. Identify $\mathcal{W}$ with the kernel of the reduction map 
\[\left(A/(T^2)\right)^{r-1}\times \left(A/(T^2)\right)^\times \xrightarrow{\op{mod}_T} \left(A/(T)\right)^{r-1}\times \left(A/(T)\right)^\times .\]
Thus in particular, $\mathcal{W}$ is isomorphic to $(A/(T))^r$. 
\begin{proposition}\label{prop rho2 contains U2}
    With respect to notation above, $\rho_2(\op{I}_{\fl})$ has nontrivial intersection with $\mathcal{W}$.
\end{proposition}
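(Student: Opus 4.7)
The plan is to construct $\sigma\in\op{I}_\fl$ with $\rho_2(\sigma)\in\mathcal{W}\setminus\{I\}$ by proving that the extension $F_\fl(\phi[T^2])/F_\fl(\phi[T])$ is ramified at $\fl$. The guiding idea is that the hypothesis $p\nmid v_\fl(g_r)$ forces a power of $p$ into the ramification denominator at level $T^2$ which cannot already appear at level $T$, where the exact sequence of Proposition~\ref{Galois repn propn}(1) confines inertia to the finite group $\cU_1$.

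I would first extract valuations via Newton polygons at $\fl$. Set $n:=v_\fl(g_r)>0$; using $v_\fl(g_{r-1})=0$, the Newton polygon of $\phi_T(x)/x$ produces $q^{r-1}-1$ roots of valuation $0$ (the nonzero elements of $\varphi[T]$) and $q^{r-1}(q-1)$ roots $\pi_1\in\phi[T]\setminus\varphi[T]$ with
\[v_\fl(\pi_1)=-\frac{n}{q^{r-1}(q-1)}.\]
Fixing such $\pi_1$ and examining the Newton polygon of $\phi_T(x)-\pi_1$ (whose leftmost vertex is now $(0,v_\fl(\pi_1))$) yields $q^{r-1}$ roots $\pi\in\phi[T^2]$ with
\[v_\fl(\pi)=-\frac{n}{q^{2(r-1)}(q-1)}.\]
Since $p\nmid n$, the denominator of $v_\fl(\pi)$ in lowest terms is divisible by $q^{2(r-1)}$.

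I would then convert this into a ramification comparison. As $\pi\in F_\fl(\phi[T^2])$, the denominator of $v_\fl(\pi)$ must divide $e_2:=e(F_\fl(\phi[T^2])/F_\fl)$, giving $e_2\geq q^{2(r-1)}$. For the $T$-level, good reduction of $\varphi$ at $\fl\neq(T)$ combined with the exact sequence forces $\rho_1(\op{I}_\fl)\subseteq\cU_1$, so that
\[e_1:=e(F_\fl(\phi[T])/F_\fl)=|\rho_1(\op{I}_\fl)|\leq |\cU_1|=q^{r-1}(q-1).\]
Since $q^{r-1}(q-1)<q^{2(r-1)}$ for $r\geq 2$, we have $e_2>e_1$. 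The mod-$T$ reduction $\rho_2(\op{I}_\fl)\twoheadrightarrow\rho_1(\op{I}_\fl)$ has kernel $\rho_2(\op{I}_\fl)\cap\mathcal{W}$, and the order formula $|\rho_2(\op{I}_\fl)\cap\mathcal{W}|=e_2/e_1>1$ proves the proposition.

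The main obstacle is making the Newton polygon computations robust under weak hypotheses on the intermediate $g_i$. The hypothesis $v_\fl(g_{r-1})=0$ is what places the vertex at $(q^{r-1}-1,0)$ on the lower convex hull, producing the exact slope $-n/(q^{r-1}(q-1))$ rather than some larger denominator that would leave more room inside $\cU_1$; the points $(q^i-1,v_\fl(g_i))$ for $i<r-1$ can only lie on or above this hull since their $y$-coordinates are nonnegative, so they do not perturb the slope. The hypothesis $p\nmid v_\fl(g_r)$ then promotes the fractional valuation of $\pi$ into an exact power-of-$p$ contribution to the ramification denominator, enabling the comparison $q^{2(r-1)}>q^{r-1}(q-1)$ that forces new ramification past what $\cU_1$ can accommodate.
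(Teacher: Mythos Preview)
Your argument is correct and reaches the same key valuation $-v_\fl(g_r)/\bigl((q-1)q^{2(r-1)}\bigr)$ as the paper, but by a genuinely different and more elementary route. The paper works through the Drinfeld--Tate uniformization: it picks $z$ with $\varphi_{T^2}(z)=\gamma$, uses the product expansion $\phi_{T^2}(x)=T^2x\prod_{0\neq\pi\in\mathfrak{J}}(1-x/e_\Lambda(\pi))$, and then invokes three technical lemmas computing $v\bigl(e_\Lambda(\cdot)\bigr)$ on various torsion combinations to solve for $v(z)$. You bypass the exponential $e_\Lambda$ entirely and read the valuation of a suitable $\pi\in\phi[T^2]$ directly from two Newton polygons, first of $\phi_T(x)/x$ and then of $\phi_T(x)-\pi_1$; the hypothesis $v_\fl(g_{r-1})=0$ is exactly what pins the vertex at $(q^{r-1},0)$ in both hulls, and the integrality $v_\fl(g_i)\ge 0$ keeps the intermediate points from interfering. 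The concluding counting step is essentially the same in both proofs: your inequality $e_1\le|\cU_1|=q^{r-1}(q-1)<q^{2(r-1)}\mid e_2$ is a slightly coarser version of the paper's divisibility $q^{r-1}\mid\#\bigl(\rho_2(\op{I}_\fl)\cap\mathcal{W}\bigr)$, but either suffices. Your approach buys a self-contained proof that does not require the auxiliary lemmas on $e_\Lambda$; the paper's approach buys a more structural picture of where the ramification lives inside the Tate uniformization.
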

It follows from the above Proposition that $\vec{g}\in \cC_r''$. We postpone the proof till the end of this section. Setting $\mathfrak{J}:=(\varphi_{T^2})^{-1}(\Lambda)/\Lambda$, it follows from Proposition \ref{Galois repn propn} that $e_\Lambda$ induces a Galois equivariant isomorphism $\mathfrak{J}\xrightarrow{\sim} \phi[T^2]$. Let $\gamma$ be a generator of $\Lambda$ and $z\in F_{\mathfrak{l}}^{\op{sep}}$ be such that $\varphi_{T^2}(z)=\gamma$. We consider the maximal unramified extension $F_{\mathfrak{l}}^{\op{nr}}$ of $F_{\mathfrak{l}}$. Note that
\[\rho_2(\op{I}_{\mathfrak{l}})\simeq \op{Gal}\left(F_{\mathfrak{l}}^{\op{nr}}(\phi[T^2])/F_{\mathfrak{l}}^{\op{nr}}\right)\simeq \op{Gal}\left(F_{\mathfrak{l}}^{\op{nr}}(\mathfrak{J})/F_{\mathfrak{l}}^{\op{nr}}\right)\]
and that $z$ is contained in $F_{\mathfrak{l}}^{\op{nr}}(\mathfrak{J})$. Therefore, we find that $F_{\mathfrak{l}}^{\op{nr}}(z)\subseteq F_{\mathfrak{l}}^{\op{nr}}(\mathfrak{J})$. We shall calculate the ramification index $z$ and thus obtain some clarity of the power of $q$ that divides $[F_{\mathfrak{l}}^{\op{nr}}(\phi[T^2]): F_{\mathfrak{l}}^{\op{nr}}]$.

\par It follows from Proposition \ref{Galois repn propn} that 
\[\phi_{T^2}(x)=T^2 x\prod_{0\neq \pi\in \mathfrak{J}} \left(1-\frac{x}{e_\Lambda(\pi)}\right).\]
Let us compare the valuations of the leading coefficients of the left and right hand sides of the equation above. One has that
\[g_r^{1+q^r}=(-1)^{\# \mathfrak{J}} T^2 \left(\prod_{0\neq \pi\in \mathfrak{J}} e_{\Lambda}(\pi)\right)^{-1}.\]
Let $w_1, \dots, w_{r-1}$ be an $A/(T^2)$-basis for $\varphi[T]$. From the exactness of the sequence \eqref{T^n exact sequence}, we find that $w_1, \dots, w_{r-1}, z$ is an $A/(T^2)$-basis for $\phi_{\fl}[T^2]$. One finds that 
\begin{equation}\label{long equation}-(1+q^r)v(g_r)=\sum_{a_1, \dots, a_{r-1}, b}v\left(e_\Lambda(a_1 w_1+a_2 w_2+\dots+a_{r-1} w_{r-1} +b z)\right),\end{equation} where the sum is over tuples $(a_1, \dots, a_{r-1}, b)\in (A/(T^2))^r$ that are not identically $0$.
\begin{lemma}
    The following assertions hold;
    \begin{enumerate}
        \item $v(\gamma)<0$, 
        \item $v(z)<0$, 
        \item $v(e_\Lambda(z))=v(z)$.
    \end{enumerate}
\end{lemma}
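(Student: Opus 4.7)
The plan is to dispatch the three parts in sequence, each feeding into the next, exploiting throughout that $\varphi$ has good reduction. Writing $\varphi_T=T+c_1\tau+\dots+c_{r-1}\tau^{r-1}$, every $c_i$ lies in $\cO$ and $v(c_{r-1})=0$; I will also use $v$ for the unique extension of $v_\fl$ to $F_\fl^{\op{sep}}$, noting that $v(T)=0$ since $\fl\neq (T)$.

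For part~(1), I would exploit the discreteness of $\Lambda$. If $v(\gamma)\geq 0$, a short induction on $\deg_T(a)$ (using that every coefficient of every $\varphi_a$ lies in $\cO$) shows that $v(\varphi_a(\gamma))\geq 0$ for all $a\in A$. Then $\Lambda=\{\varphi_a(\gamma):a\in A\}$ would be contained in the bounded set $\{x\in F_\fl^{\op{sep}}:v(x)\geq 0\}$, contradicting the fact that a nonzero free $A$-submodule satisfying the discreteness condition must be unbounded in valuation.

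For part~(2), I would apply Newton polygon analysis to the polynomial $f(x):=\varphi_{T^2}(x)-\gamma$ in $x$. Its constant term $-\gamma$ has valuation $v(\gamma)<0$, its leading coefficient $c_{r-1}^{1+q^{r-1}}$ is a unit, and every intermediate coefficient (at $x^{q^i}$ for $0<i<2(r-1)$) lies in $\cO$ and so has non-negative valuation. A direct check shows that each intermediate point $(q^i,v_i)$ lies strictly above the segment joining $(0,v(\gamma))$ to $(q^{2(r-1)},0)$, since that segment has strictly negative ordinate at every $x$ with $0<x<q^{2(r-1)}$. Hence the Newton polygon is the single segment of slope $-v(\gamma)/q^{2(r-1)}>0$, and every root of $f$, in particular $z$, has valuation $v(\gamma)/q^{2(r-1)}<0$.

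For part~(3), I would first determine the valuations of the elements of $\Lambda$. Expanding $\varphi_a(\gamma)=\sum_j d_j\gamma^{q^j}$ with $d_j\in\cO$ and $d_{(r-1)\deg_T a}$ a unit, the ultrametric inequality combined with $v(\gamma)<0$ shows that the top $\tau$-term strictly dominates, giving $v(\varphi_a(\gamma))=q^{(r-1)\deg_T(a)}\cdot v(\gamma)$ for every nonzero $a\in A$. In particular every nonzero $\lambda\in\Lambda$ satisfies $v(\lambda)\leq v(\gamma)$, with equality only on $\F_q^\times\gamma$. Combined with part~(2), this yields
\[v(z)=\frac{v(\gamma)}{q^{2(r-1)}}>v(\gamma)\geq v(\lambda)\]
for every nonzero $\lambda\in\Lambda$, so $v(z/\lambda)>0$ and therefore $v(1-z/\lambda)=0$ for every such $\lambda$. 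Substituting into \eqref{e Lambda definition} immediately gives $v(e_\Lambda(z))=v(z)$. The only step requiring genuine care is the Newton polygon verification in part~(2); once this is in hand, parts~(1) and (3) follow easily from discreteness and the lattice valuation formula respectively.
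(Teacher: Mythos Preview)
Your proof is correct, but it takes a different route from the paper's in parts (2) and (3). The paper disposes of (2) by contradiction (if $v(z)\geq 0$ then $v(\varphi_{T^2}(z))\geq 0$, contradicting $v(\gamma)<0$), obtaining only the inequality $v(z)<0$; you instead run a Newton polygon and extract the exact value $v(z)=v(\gamma)/q^{2(r-1)}$. (A minor quibble: your list of intermediate coefficients at $x^{q^i}$ for $0<i<2(r-1)$ omits the linear term $T^2 x$ at $i=0$, but $v(T^2)=0$ and the same convexity check applies, so nothing is lost.) For (3), the paper works with the power-series expansion $e_\Lambda(z)=z+\sum_{i\geq 1}a_i z^{q^i}$, bounding $v(a_i)\geq -(q^i-1)v(\gamma)$ via \eqref{equation for a_i} to show each higher term has non-negative valuation; you instead work directly with the product formula \eqref{e Lambda definition}, using the lattice valuation formula $v(\varphi_a(\gamma))=q^{(r-1)\deg_T a}\,v(\gamma)$ together with your precise value of $v(z)$ to see that every factor $1-z/\lambda$ is a $1$-unit. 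Your approach is arguably cleaner and entirely self-contained: the extra precision you gain in (2) is exactly what makes the product argument in (3) go through without any coefficient estimates. The paper's approach, by contrast, hides the value of $v(z)$ (indeed its bound in (3) seems to tacitly use $v(z)\geq v(\gamma)/q^{2(r-1)}$ without having proved it in (2)). For part (1), the paper just cites \cite[Example 6.2.2]{papibook}; your direct discreteness argument is essentially what that reference unpacks.
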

\begin{proof}
    \par Part (1) follows from \cite[Example 6.2.2 p.353]{papibook}. 
    
    \par Since every coefficient of $\varphi_{T^2}(x)$ has non-negative valuation and $\varphi_{T^2}(z)=\gamma$, it follows that $v(z)<0$. This proves (2). 
    
    \par We proceed with the proof of part (3). Note that 
    \[e_{\Lambda}(z)=z+\sum_{i=1}^\infty a_i z^{q^i},\] where according to \eqref{equation for a_i}, 
    \[a_i=(-1)^i\sum_{a_1, \dots, a_{q^i-1}\neq 0} \frac{1}{\phi_{a_1}(\gamma)\dots \phi_{a_{q^i-1}}(\gamma)}.\]
    We thus find that that $v(a_i)\geq -(q^i-1) v(\gamma)$, and as a result,
    \[v(a_i z^{q^i})\geq -(q^i-1)v(\gamma)+q^{i-2(r-1)}v(\gamma)\geq 0. \]
    Having proven that \[v\left(\sum_{i=1}^\infty a_i z^{q^i}\right)\geq 0\] and $v(z)<0$, we deduce that $v(e_\Lambda(z))=v(z)$. This completes the proof of (3).
\end{proof}

\begin{lemma}\label{tiny lemma}
    Let $w\in F_{\fl}^{\op{sep}}$ be such that $v(w)\geq 0$. Then we find that $v(e_\Lambda(w))=v(w)$. 
\end{lemma}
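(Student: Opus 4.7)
The plan is to expand $e_\Lambda(w)$ as a power series around $w = 0$ and check, term by term, that the linear term $w$ dominates. From equation \eqref{equation for a_i} I have
\[
e_\Lambda(w) = w + \sum_{i=1}^{\infty} a_i\, w^{q^i}, \qquad
a_i = (-1)^i \sum_{a_1,\dots,a_{q^i-1}\neq 0} \frac{1}{\phi_{a_1}(\gamma)\cdots \phi_{a_{q^i-1}}(\gamma)}.
\]
So the goal reduces to showing $v(a_i w^{q^i}) > v(w)$ for every $i\geq 1$, after which the ultrametric inequality forces $v(e_\Lambda(w)) = v(w)$.

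To estimate $v(a_i)$, I would exploit the fact that $\Lambda$ is a discrete $\F_q$-subspace with generator $\gamma$ of strictly negative valuation (part (1) of the previous lemma). Every nonzero element $\phi_a(\gamma)$ of $\Lambda$ has valuation at most $v(\gamma) < 0$ (the lattice elements are ``larger'' than the chosen generator in the non-archimedean sense). Hence each term in the sum defining $a_i$ has valuation at least $-(q^i-1)v(\gamma) > 0$, and the ultrametric inequality gives the uniform bound
\[
v(a_i) \;\geq\; -(q^i-1)\, v(\gamma) \;>\; 0.
\]
This is exactly the bound already invoked in the preceding lemma, so I can simply cite it.

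Combining with the hypothesis $v(w) \geq 0$, I compute
\[
v(a_i w^{q^i}) \;\geq\; -(q^i - 1)\, v(\gamma) + q^i\, v(w) \;=\; v(w) + (q^i - 1)\bigl(v(w) - v(\gamma)\bigr).
\]
Since $v(w) \geq 0 > v(\gamma)$, the quantity $v(w) - v(\gamma)$ is strictly positive, and for $i\geq 1$ the factor $(q^i-1)$ is positive as well. Thus $v(a_i w^{q^i}) > v(w)$ for every $i\geq 1$, and applying the ultrametric inequality to the series for $e_\Lambda(w)$ yields $v(e_\Lambda(w)) = v(w)$.

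The proof is short and essentially a recycling of the estimate already used in the previous lemma; the one point to be careful about is the inequality $v(\phi_a(\gamma)) \leq v(\gamma)$ for nonzero $a\in A$, which is the main (and only mild) technical input and follows from the discreteness of $\Lambda$ together with $v(\gamma)<0$. I do not anticipate any real obstacle beyond bookkeeping the inequalities correctly.
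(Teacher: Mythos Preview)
Your argument is correct and follows essentially the same route as the paper: both expand $e_\Lambda(w)=w+\sum_{i\ge 1}a_i w^{q^i}$, use the bound $v(a_i)>0$ (already obtained in the preceding lemma via $v(a_i)\ge -(q^i-1)v(\gamma)$), and conclude by the ultrametric inequality. The paper's version is terser, simply recording $v(a_i)>0$ and $v(w)\ge 0$; your explicit rewriting $v(a_i w^{q^i})\ge v(w)+(q^i-1)(v(w)-v(\gamma))$ is a harmless elaboration, though the justification of $v(\phi_a(\gamma))\le v(\gamma)$ is really via the good reduction of $\varphi$ (integral coefficients with unit leading term) rather than discreteness per se.
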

\begin{proof}
    We write 
    \[e_\Lambda(w)=w+\sum_{i=1} a_i w^{q^i},\] note that $v(a_i)>0$ and $v(w)\geq 0$. Therefore, it follows that $v(e_\Lambda(w))=v(w)$.
\end{proof}

\begin{lemma}\label{v equals technical lemma}
    With respect to notation above, we have that 
    \[\begin{split} & v\left(e_\Lambda(a_1 w_1+a_2 w_2+\dots+a_{r-1} w_{r-1} +b z)\right) \\ = &\begin{cases}
        q^{(r-1)i}v(z) & \text{ if }b\neq 0\text{ and }\op{deg}_T(b)=i.\\
        v(a_1 w_1+a_2 w_2+\dots+a_{r-1} w_{r-1})& \text{ otherwise. }
    \end{cases}
    \end{split}\]
\end{lemma}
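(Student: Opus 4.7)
The approach is to decompose the argument $\xi := \varphi_{a_1}(w_1) + \cdots + \varphi_{a_{r-1}}(w_{r-1}) + \varphi_b(z)$ of $e_\Lambda$ into its ``integral'' part $\xi_0 := \sum_j \varphi_{a_j}(w_j) \in \varphi[T^2]$ and its ``$z$-component'' $\varphi_b(z)$, compute the valuation of $\xi$ by the non-Archimedean triangle inequality, and then transfer from $v(\xi)$ to $v(e_\Lambda(\xi))$ via the same power-series estimate used in part~(3) of the preceding lemma.

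Since $\varphi$ has good reduction at $\fl \neq (T)$, its $T^2$-torsion has the expected size and is integral, so $v(\xi_0) \geq 0$. To analyse the $z$-component, write $\varphi_T = T + c_1 \tau + \cdots + c_{r-1} \tau^{r-1}$ with $c_{r-1} \in \cO^\times$; since $v(z) < 0$, the top-$\tau$ term dominates in $\varphi_T(y)$ whenever $v(y) < 0$, giving $v(\varphi_T(y)) = q^{r-1} v(y)$. Iterating, $v(\varphi_{T^k}(z)) = q^{k(r-1)} v(z)$. For $b = \sum_{k \leq i} b_k T^k$ with $b_i \in \F_q^\times$, the expansion $\varphi_b(z) = \sum_k b_k \varphi_{T^k}(z)$ attains a strict minimum in valuation at $k = i$, so $v(\varphi_b(z)) = q^{i(r-1)} v(z) < 0$. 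By the ultrametric inequality, $v(\xi) = q^{i(r-1)} v(z)$ when $b \neq 0$, and $v(\xi) = v(\xi_0) \geq 0$ when $b = 0$.

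Next I would transfer from $\xi$ to $e_\Lambda(\xi)$. Expanding $e_\Lambda(\xi) = \xi + \sum_{k \geq 1} a_k \xi^{q^k}$ and using the bound $v(a_k) \geq -(q^k - 1) v(\gamma)$ from \eqref{equation for a_i}, I would estimate
\[ v(a_k \xi^{q^k}) - v(\xi) \;\geq\; (q^k - 1)\bigl(v(\xi) - v(\gamma)\bigr), \]
so the linear term $\xi$ dominates whenever $v(\xi) > v(\gamma)$, in which case $v(e_\Lambda(\xi)) = v(\xi)$. For $b = 0$ this is immediate since $v(\xi) \geq 0 > v(\gamma)$ (and agrees with Lemma~\ref{tiny lemma}). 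For $b \neq 0$, applying the same iterated leading-term analysis to the identity $\varphi_{T^2}(z) = \gamma$ gives $v(z) = q^{-2(r-1)} v(\gamma)$, hence $v(\xi) = q^{(i-2)(r-1)} v(\gamma)$; since $i \in \{0,1\}$ we have $(i-2)(r-1) < 0$ and thus $v(\xi) > v(\gamma)$ (recalling $v(\gamma) < 0$). This yields $v(e_\Lambda(\xi)) = v(\xi) = q^{i(r-1)} v(z)$ and completes both cases.

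The main obstacle is the last inequality $v(\xi) > v(\gamma)$ in the $b \neq 0$ case: it is sharp and would fail for $\deg_T(b) \geq 2$. The restriction to $b \in A/(T^2)$, i.e.\ $i \leq 1$, is precisely what makes the power-series estimate applicable here, and is the reason the argument is carried out at level $T^2$ rather than higher.
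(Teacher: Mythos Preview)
Your argument is correct. The paper takes a slightly different organizational route: instead of first computing $v(\xi)$ and then showing $v(e_\Lambda(\xi))=v(\xi)$ via the single estimate $v(\xi)>v(\gamma)$, the paper pushes $e_\Lambda$ through the sum using additivity and the intertwining relation to obtain $\sum_j \varphi_{a_j}(e_\Lambda(w_j))+\varphi_b(e_\Lambda(z))$, and then applies the two earlier results $v(e_\Lambda(w_j))=v(w_j)\ge 0$ (Lemma~\ref{tiny lemma}) and $v(e_\Lambda(z))=v(z)<0$ (part~(3) of the preceding lemma) as black boxes before analysing $\varphi_b$ term by term. Both arguments rest on the same underlying power-series bound for $e_\Lambda$; your version is more self-contained, sidesteps the intertwining relation entirely, and makes explicit why the inequality $v(\xi)>v(\gamma)$ breaks down for $\deg_T(b)\ge 2$, which is precisely the reason the whole computation is carried out at level $T^2$.
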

\begin{proof}
    \par First we consider the case when $b\neq 0$ and write $b=c+dT$, where $c, d\in \F_q$. One finds that 
\[\begin{split} & e_\Lambda(a_1 w_1+a_2 w_2+\dots+a_{r-1} w_{r-1} +b z) \\ 
=& \sum_{i=1}^{r-1} \varphi_{a_i} \left(e_{\Lambda}(w_i)\right)+\varphi_b \left(e_{\Lambda}(z)\right).\end{split}\]
Note that $w_i$ is a root of $\varphi_{T^2}(x)$. We write \[\varphi_T(\tau)=T+b_1 \tau+ b_2 \tau^2+\dots+ b_{r-1}\tau^{r-1}\] and note that $v(b_{r-1})=0$ since $\varphi$ has good reduction. We find that 
\[\varphi_{T^2}=\varphi_T\circ \varphi_T=T^2+c_1\tau+\dots +c_{2(r-1)}\tau^{2(r-1)},\] where $c_{2(r-1)}=b_{r-1}^{1+q^{r-1}}$. In particular, $v(c_{2(r-1)})=0$ and all slopes of the Newton polygon of $\phi_{T^2}(x)$ are $\leq 0$. We deduce therefore that all solutions to $\varphi_{T^2}(x)=0$ have non-negative valuation, in particular, $v(w_i)\geq 0$ for $1\leq i\leq r-1$. In particular, we find that $v(w_i)\geq 0$. It follows from Lemma \ref{tiny lemma} that $v(e_{\Lambda}(w_i))=v(w_i)\geq 0$. We find that
\[\varphi_b(x)=cx+d\varphi_T(x)=bx+db_1x^q+\dots +d b_{(r-1)} x^{q^{(r-1)}},\]
and therefore, 
\[v\left(\varphi_b(e_\Lambda(z))\right)=q^{(r-1)}v(e_\Lambda(z))=\begin{cases}
    q^{(r-1)}v(z) &\text{ if }d\neq 0;\\
    v(z) &\text{ if }d=0.
\end{cases}\]
Therefore, we have shown that if $b\neq 0$, 
\[v\left(e_\Lambda(a_1 w_1+a_2 w_2+\dots+a_{r-1} w_{r-1} +b z)\right)=v\left(\varphi_b(e_\Lambda(z))\right)=q^{(r-1)i}v(z),\]
where $i:=\op{deg}_T(b)$.
\par When $b=0$, it follows from Lemma \ref{tiny lemma} that 
\[v\left(e_\Lambda(a_1 w_1+a_2 w_2+\dots+a_{r-1} w_{r-1})\right)=v(a_1 w_1+a_2 w_2+\dots+a_{r-1} w_{r-1}).\]
\end{proof}
We conclude this section with the proof of Proposition \ref{prop rho2 contains U2}.

\begin{proof}[Proof of Proposition \ref{prop rho2 contains U2}]
    Applying Lemma \ref{v equals technical lemma}, we simplify the right hand side of \eqref{long equation}. First, we note that
    \[\begin{split}& \sum_{a_1, \dots, a_{r-1}, b; b\neq 0}v\left(e_\Lambda(a_1 w_1+a_2 w_2+\dots+a_{r-1} w_{r-1} +b z)\right) \\
=& q^{2(r-1)}  \left( \sum_{b, \op{deg} b=0} v(z)+\sum_{b, \op{deg} b=1} q^{(r-1)}v(z)\right) \\
=& q^{2(r-1)} v(z) \left( (q-1)+q(q-1)q^{(r-1)}\right) \\
=& q^{2(r-1)}(q-1)v(z)(1+q^{r}).\end{split}\]
On the other hand, 
  \[\begin{split}& \sum_{(a_1, \dots, a_{r-1})\neq 0}v\left(e_\Lambda(a_1 w_1+a_2 w_2+\dots+a_{r-1} w_{r-1})\right) \\
  =& \sum_{(a_1, \dots, a_{r-1})\neq 0}v(a_1 w_1+a_2 w_2+\dots+a_{r-1} w_{r-1}) \\
=& v\left(\prod_{(a_1, \dots, a_{r-1})\neq 0}(a_1 w_1+a_2 w_2+\dots+a_{r-1} w_{r-1})\right)\\
=& v\left(\prod_{0\neq w \in \phi[T^2]}w\right)=v(T^2)=0.\end{split}\]
For the final relation, we use the fact that $\prod_{0\neq w \in \varphi[T^2]}w$ is the constant term of $\varphi_{T^2}(x)/x$ and therefore equals $T^2$.
Thus the right hand side of \eqref{v equals technical lemma} equals \[q^{2(r-1)}(q-1)v(z)(1+q^{r}).\] Thus, we have shown that 
\[v(z)=-\frac{v(g_r)}{(q-1)q^{2(r-1)}}.\]
Since $p\nmid v(g_r)$, we find that $q^{2(r-1)}$ divides the ramification index of $F_{\fl}(z)/F_{\fl}$. This implies that $q^{2(r-1)}$ divides $\#\rho_2(\op{I}_{\fl})$. Note that $\rho_2(\op{I}_{\fl})$ is contained in $\cU_2$ and $\# \cU_2=(q-1) q^{2r-1}$. Also note that $\# \cU_1=(q-1) q^{r-1}$ and $\# \mathcal{W}=q^{r}$. We deduce that $q^{r-1}$ divides $\# \left(\rho_2(\op{I}_{\fl})\cap \mathcal{W}\right)$. This proves the result.
\end{proof}

\begin{corollary}\label{ Pi in cc}
    One has the following inclusion of sets:
    \[\Pi_r\subseteq \cC_r''.\]
\end{corollary}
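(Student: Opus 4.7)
The plan is to derive Corollary \ref{Pi in cc} directly from Proposition \ref{prop rho2 contains U2}; all that is required is a short linear-algebraic translation from ``nontrivial intersection with $\mathcal{W}$'' to ``non-scalar element of $\cH(\vec{g})^{[1]}$''.

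First, I would make explicit the relevant identifications. The reduction map $\cH(\vec{g})/\cH(\vec{g})^2 \to \cH(\vec{g})/\cH(\vec{g})^1$ has kernel canonically isomorphic to $\cH(\vec{g})^{[1]}$, which embeds into $G^{[1]} \simeq \op{M}_r(\F_q)$ via $I + TM \bmod G^2 \mapsto M \bmod T$. Under this embedding, the subgroup $\mathcal{W} = \ker(\cU_2 \to \cU_1)$ corresponds, through $\Phi_2$, to the $\F_q$-subspace of $\op{M}_r(\F_q)$ consisting of matrices all of whose entries vanish outside the last column: the first $r-1$ such entries arise from the tuple $(Tc_1,\dots,Tc_{r-1})\in (TA/T^2A)^{r-1}$ and the last-column bottom entry from the expansion $1 + Tc_r$.

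Next, I would run the scalar check. Given $\vec{g}\in \Pi_r$, Proposition \ref{prop rho2 contains U2} furnishes a nontrivial element $u\in \rho_2(\op{I}_\fl)\cap \mathcal{W}$, whose image $N$ in $G^{[1]}$ is by the above a nonzero matrix supported in the last column only. If $N$ were a scalar matrix $\lambda I_r$, then the vanishing of its first $r-1$ diagonal entries would force $\lambda = 0$ (using $r\geq 2$), hence $N = 0$, contradicting $u\neq 1$. Consequently $N$ is a non-scalar element of $\cH(\vec{g})^{[1]}$, so $\vec{g}\in \cC_r''$.

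This yields the inclusion $\Pi_r\subseteq \cC_r''$. I do not anticipate any genuine difficulty at this stage: the arithmetic content is entirely contained in Proposition \ref{prop rho2 contains U2}, and the remaining step is purely formal bookkeeping about the shape of matrices in $\mathcal{W}$.
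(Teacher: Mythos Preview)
Your proposal is correct and follows essentially the same approach as the paper: invoke Proposition \ref{prop rho2 contains U2} to obtain a nontrivial element of $\rho_2(\op{I}_\fl)\cap\mathcal{W}$, and then observe that such an element yields a non-scalar matrix in $\cH(\vec{g})^{[1]}$. The paper compresses your last-column scalar check into the single sentence ``$\mathcal{W}$ does not contain any scalar matrices,'' whereas you spell out the identification $\mathcal{W}\hookrightarrow G^{[1]}$ and the reason a nonzero last-column matrix cannot be scalar when $r\geq 2$; the content is identical.
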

\begin{proof}
    Let $\vec{g}\in \Pi_r$, then by assumption there is a prime $\fl\in \Omega_A$ such that $\fl\neq (T)$ and 
    \begin{itemize}
    \item $v(g_{r-1})=0$, 
    \item $p\nmid v(g_{r})$.
\end{itemize}
Proposition \ref{prop rho2 contains U2} implies that $\rho_2(\op{I}_{\fl})$ contains a nontrivial element in $\mathcal{W}$. Note that $\mathcal{W}$ does not contain any scalar matrices. Therefore, it follows that $\cH(\vec{g})^{[1]}$ contains a non-scalar matrix. In other words, $\vec{g}\in \cC_r''$.
\end{proof}
\section{Density results}\label{s 6}

\par In this section we prove that $\mathfrak{d}(\cF_r)=1$. We begin by showing that $\Pi_r$ defined in the previous section has density $1$.

\begin{proposition}\label{Pi r has density 1}
    With respect to notation above, $\mathfrak{d}(\Pi_r)=1$.
\end{proposition}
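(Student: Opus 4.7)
The plan is to show that the complement $\cC_r \setminus \Pi_r$ has density zero via a standard inclusion–exclusion sieve. Since the single condition $v_\fl(g_r) = 1$ already implies $p \nmid v_\fl(g_r)$, it is enough to prove the stronger statement that the set of $\vec{g} \in \cC_r$ admitting some prime $\fl \in \Omega_A \setminus \{(T)\}$ with
\[v_\fl(g_{r-1}) = 0 \quad \text{and} \quad v_\fl(g_r) = 1\]
has density $1$. I will call such a $\vec{g}$ \emph{good at $\fl$}, and \emph{bad at $\fl$} otherwise. Being good at $\fl$ depends only on the reduction of $(g_{r-1}, g_r)$ modulo $(\fl, \fl^2)$, and a direct count shows that out of $|\fl|^3$ such reductions, $(|\fl|-1)^2$ are good. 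Setting $|\fl| := q^{\deg \fl}$, this yields a local density
\[m_\fl := \frac{(|\fl|-1)^2}{|\fl|^3} \sim \frac{1}{|\fl|} \quad \text{as } |\fl| \to \infty.\]

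The next step is the sieve estimate. For a finite set $S \subseteq \Omega_A \setminus \{(T)\}$, let $B_S \subseteq \cC_r$ be the set of $\vec{g}$ that are bad at every $\fl \in S$. Setting $M := \prod_{\fl \in S} \fl^2$, the Chinese Remainder Theorem expresses the joint local condition as a subset of $(A/M)^r$, with the coordinates at different primes mutually independent. For $X > \deg M$, each residue class modulo $M$ contains exactly $q^{X - \deg M}$ polynomials of degree less than $X$, so combining \eqref{cC(X) count} with this equidistribution and absorbing the negligible contribution from $g_r \equiv 0$ (which accounts for only a $q^{-X}$ fraction of $\cC_r(X)$) gives
\[\mathfrak{d}(B_S) = \prod_{\fl \in S}(1 - m_\fl).\]

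To conclude, I invoke the divergence of $\sum_{\fl} 1/|\fl|$ over primes of $A$. Since the number of monic irreducibles of degree $n$ in $A$ is $q^n/n + O(q^{n/2})$, this sum behaves like $\sum_{n \geq 1} 1/n$ and diverges. Hence $\sum_\fl m_\fl = \infty$, so the Euler product $\prod_{\fl \neq (T)}(1 - m_\fl)$ vanishes. Given any $\varepsilon > 0$, I therefore choose $S$ large enough that $\mathfrak{d}(B_S) < \varepsilon$. Since the complement of the ``good-somewhere'' set is contained in $B_S$ for every such $S$, its upper density is at most $\varepsilon$; as $\varepsilon$ is arbitrary, this complement has density zero, giving $\mathfrak{d}(\Pi_r) = 1$.

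No genuinely difficult step is expected: the argument is a routine large-sieve-style Euler product computation. The only items needing care are the equidistribution of cosets modulo $M$ inside $\cC_r(X)$ as $X \to \infty$ and the bookkeeping of the $g_r \neq 0$ constraint, both of which are standard. The conceptual content lies entirely in noticing that a single prime already contributes local density $\sim 1/|\fl|$, so the divergence of $\sum 1/|\fl|$ in the function-field setting drives the sieve to a full density-one conclusion.
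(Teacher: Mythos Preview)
Your proof is correct and follows essentially the same sieve strategy as the paper: bound the complement of $\Pi_r$ by a set defined by local conditions at each prime $\fl\neq(T)$, compute the local densities, and use the divergence of $\sum_{\fl}1/|\fl|$ to force the infinite product to vanish. The only cosmetic difference is the choice of local event: the paper sieves on $g_r$ alone modulo $\fl^p$ (bad meaning $v_\fl(g_r)\in\{0\}\cup[p,\infty)$, with local bad density $1-q_\fl^{-1}+q_\fl^{-p}$), whereas you sieve on $(g_{r-1},g_r)$ modulo $(\fl,\fl^2)$ with the stronger ``good'' requirement $v_\fl(g_r)=1$; in both cases the local good density is $\sim 1/|\fl|$ and the conclusion is identical.
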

\begin{proof}

\par Let $\Omega_r$ be the subset of $\cC_r$ consisting of $\vec{g}=(g_1, \dots, g_r)$ such that for all primes $\mathfrak{l}\in \Omega_A\setminus (T) $, one has that $ v(g_r)=0$ or $v(g_r)\geq p$. It is clear that the complement of $\Pi_r$ is contained in $\Omega_r$. We show that $\mathfrak{d}(\Omega_r)=0$ and deduce that $\mathfrak{d}(\Pi_r)=1$. Let $\cC_r(\mathfrak{l})$ consist of all tuples in $(A/\mathfrak{l}^p)^r$ and consider the $\fl^p$-reduction map 
\[\pi_{\fl} :\cC_r\rightarrow \cC_r(\mathfrak{l}).\]
Let $\Omega_r(\mathfrak{l}):=\pi_{\fl}\left(\Omega_r\right)$, consisting of all tuples $(\bar{g}_1, \bar{g}_2, \dots, \bar{g}_r)$ such that $\mathfrak{l}\nmid \bar{g}_{r}$ or $\bar{g}_r=0$. Therefore, we find that $\# \Omega_r(\fl)=q_{\fl}^{p(r-1)}\left(q_{\fl}^p-q_{\fl}^{p-1}+1\right)$, where $q_\fl:=q^{\op{deg}(\fl)}$. Define
\[\mathfrak{d}(\fl):=\frac{\# \Omega_r(\fl)}{\# \cC_r(\fl)}=\left(1-\frac{1}{q_{\fl}}+\frac{1}{q_{\fl}^p}\right)\]
and given a finite set of primes $S\subset \Omega_A\backslash (T)$ set $\mathfrak{d}_S:=\prod_{\fl\in S} \mathfrak{d}(\fl)$. Define $\Omega_r^S$ to be the set of all $\vec{g}\in \cC_r$ such that for all primes $\fl\in S$,  \[v(g_r)=0\text{ or }v(g_r)\geq p.\]
Since $\Omega_r^S$ is defined by finitely many congruence conditions, it is easy to see that 
\[\Omega_r^S\sim \mathfrak{d}_S q^{rX}. \]
On the other hand, $\Omega_r\subset \Omega_r^S$ and thus for all $S$, 
\[\overline{\mathfrak{d}}(\Omega_r)\leq \mathfrak{d}(\Omega_r^S)=\left(\prod_{\fl\in S} \mathfrak{d}(\fl)\right).\]
Since $S$ is an arbitrary finite subset of $\Omega_A\backslash (T)$, it follows that 
\[\overline{\mathfrak{d}}(\Omega_r)\leq \prod_{\fl\in \Omega_A\backslash (T)}\mathfrak{d}(\fl)\leq \prod_{\fl\in \Omega_A\backslash (T)}\left(1-\frac{1}{q_{\fl}}+\frac{1}{q_{\fl}^p}\right). \]
Lemma \ref{last lemma} shows that 
\[\prod_{\fl\in \Omega_A\backslash (T)}\left(1-\frac{1}{q_{\fl}}+\frac{1}{q_{\fl}^p}\right)=0\] and thus it follows that $\mathfrak{d}(\Omega_r)=0$. This implies that $\mathfrak{d}(\Pi_r)=1$.
\end{proof}
\begin{lemma}\label{last lemma}
    With respect to notation above, 
    \[\prod_{\fl\in \Omega_A}\left(1-\frac{1}{q_{\fl}}+\frac{1}{q_{\fl}^p}\right)=0. \]
\end{lemma}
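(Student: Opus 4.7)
The plan is to reduce the statement to the divergence of the series $\sum_{\fl} 1/q_\fl$, which follows from the prime polynomial theorem for $\F_q[T]$, via the standard trick of taking logarithms of the Euler-type product.

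First I would establish the pointwise estimate
\[
1 - \frac{1}{q_\fl} + \frac{1}{q_\fl^p} \leq 1 - \frac{1}{2 q_\fl}
\]
for every prime $\fl \in \Omega_A$. This holds because $p \geq 3$ (the paper assumes $p$ odd) and $q_\fl \geq q \geq 5$, so $1/q_\fl^p \leq 1/(2 q_\fl)$. Each factor therefore lies strictly between $0$ and $1$, and the inequality $\log(1-x) \leq -x$ for $x \in (0,1)$ gives
\[
\log\left(1 - \frac{1}{q_\fl} + \frac{1}{q_\fl^p}\right) \leq -\frac{1}{2 q_\fl}.
\]
Consequently, proving that the product is $0$ reduces to proving that $\sum_{\fl \in \Omega_A} 1/q_\fl$ diverges.

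Next I would invoke the prime polynomial theorem for $A = \F_q[T]$: the number of monic irreducibles of degree $n$ is
\[
\pi_q(n) = \frac{1}{n}\sum_{d \mid n} \mu(d)\, q^{n/d} = \frac{q^n}{n} + O\!\left(q^{n/2}\right).
\]
Grouping primes by degree and using $q_\fl = q^{\deg \fl}$,
\[
\sum_{\fl \in \Omega_A} \frac{1}{q_\fl} \;=\; \sum_{n \geq 1} \frac{\pi_q(n)}{q^n} \;=\; \sum_{n \geq 1} \frac{1}{n} + O(1),
\]
which diverges by comparison with the harmonic series. (Equivalently, one can cite that $\zeta_A(s) = (1 - q^{1-s})^{-1}$ has a simple pole at $s=1$, forcing the Dirichlet series $\sum_\fl q_\fl^{-s}$ to blow up as $s \to 1^+$.)

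Combining the two steps gives $\sum_{\fl} \log\bigl(1 - 1/q_\fl + 1/q_\fl^p\bigr) = -\infty$, so the product vanishes. There is no serious obstacle here; the argument is essentially a weak form of Mertens' theorem for $\F_q[T]$, and the only thing to be slightly careful about is arranging the logarithm inequality with a uniform positive constant (the factor $1/2$ above), which is ensured by the standing hypotheses $p \geq 3$ and $q \geq 5$.
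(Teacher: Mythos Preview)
Your proof is correct and follows essentially the same route as the paper: take logarithms, reduce to the divergence of $\sum_{\fl} 1/q_{\fl}$, and invoke the prime polynomial theorem $c_n = q^n/n + O(q^{n/2})$ to compare with the harmonic series. The only difference is cosmetic---you make the comparison explicit via the bound $1 - 1/q_\fl + 1/q_\fl^p \le 1 - 1/(2q_\fl)$, whereas the paper simply appeals to the limit comparison test.
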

\begin{proof}
    Note that \[\prod_{\fl}\left(1-\frac{1}{q_{\fl}}+\frac{1}{q_{\fl}^p}\right)=\prod_{n=1}^\infty \left(1-\frac{1}{q^n}+\frac{1}{q^{np}}\right)^{c_n},\] where $c_n$ is the number of irreducible polynomial of degree $n$ over $\F_q$. This product converges to $0$ if and only if 
    \[\sum_{n=1}^\infty c_n\op{log} \left(1-\frac{1}{q^n}+\frac{1}{q^{np}}\right)\] goes to $-\infty$. By the limit comparison test, this is equivalent to 
    \[\sum_n \frac{c_n}{q^n} \] goes to $+\infty$. We have that 
    \[c_n=q^n/n +O(q^{n/2}/n)\]
    \cite[Theorem 2.2]{Rosen} the result follows.
\end{proof}

\begin{theorem}\label{main thm end}
    Let $r\geq 2$, then $\cF_r$ has density $1$.
\end{theorem}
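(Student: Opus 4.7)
The plan is to assemble Theorem \ref{main thm end} directly from the pieces already in place. Recall from \eqref{set intersection property} that $\cF_r = \cC_r' \cap \cC_r''$, so it suffices to show that both $\cC_r'$ and $\cC_r''$ have density $1$ and then use a standard fact that a finite intersection of density-$1$ subsets of $\cC_r$ has density $1$.

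First I would invoke Proposition \ref{cc prime =1}, which already gives $\mathfrak{d}(\cC_r') = 1$. Next, by Corollary \ref{ Pi in cc} I have the inclusion $\Pi_r \subseteq \cC_r''$, and Proposition \ref{Pi r has density 1} asserts that $\mathfrak{d}(\Pi_r) = 1$. Since density is monotone with respect to inclusion, the relation $1 = \underline{\mathfrak{d}}(\Pi_r) \leq \underline{\mathfrak{d}}(\cC_r'') \leq \overline{\mathfrak{d}}(\cC_r'') \leq 1$ forces $\mathfrak{d}(\cC_r'') = 1$.

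To finish, I combine these two density-$1$ statements. For any subsets $S_1, S_2 \subseteq \cC_r$, the complement (in $\cC_r$) of $S_1 \cap S_2$ is contained in $(\cC_r \setminus S_1) \cup (\cC_r \setminus S_2)$, so
\[
\# \bigl(\cC_r(X) \setminus (S_1 \cap S_2)(X)\bigr) \leq \# \bigl(\cC_r(X) \setminus S_1(X)\bigr) + \# \bigl(\cC_r(X) \setminus S_2(X)\bigr).
\]
Dividing by $\# \cC_r(X)$ and letting $X \to \infty$, if $\mathfrak{d}(S_1) = \mathfrak{d}(S_2) = 1$ then both terms on the right tend to $0$, so $\mathfrak{d}(S_1 \cap S_2) = 1$. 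Applying this with $S_1 = \cC_r'$ and $S_2 = \cC_r''$, and using \eqref{set intersection property}, gives $\mathfrak{d}(\cF_r) = 1$.

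There is no real obstacle at this stage, since all the heavy lifting has been done: the Hilbert irreducibility input for $\cC_r'$, the Drinfeld--Tate uniformization analysis culminating in Proposition \ref{prop rho2 contains U2} for the containment $\Pi_r \subseteq \cC_r''$, and the sieve estimate using Lemma \ref{last lemma} for $\mathfrak{d}(\Pi_r) = 1$. The only things worth double-checking in the write-up are that the hypothesis $q \geq 5$ (equivalently $q \geq 4$ in Proposition \ref{PR prop}) is in force so that the Pink--R\"utsche criterion applies, and that the identification $\cF_r = \cC_r' \cap \cC_r''$ is cited correctly.
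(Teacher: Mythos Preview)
Your proposal is correct and follows essentially the same route as the paper: invoke $\cF_r=\cC_r'\cap\cC_r''$, use Proposition~\ref{cc prime =1} for $\mathfrak{d}(\cC_r')=1$, and combine Corollary~\ref{ Pi in cc} with Proposition~\ref{Pi r has density 1} to get $\mathfrak{d}(\cC_r'')=1$. The only difference is that you spell out the elementary fact that an intersection of two density-$1$ sets has density $1$, which the paper leaves implicit.
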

\begin{proof}
    Noting that $\cF_r=\cC_r'\cap \cC_r''$ and that Proposition \ref{cc prime =1} asserts that $\mathfrak{d}(\cC_r')=1$. Thus, in order to complete the proof, it suffices to show that $\mathfrak{d}(\cC_r'')=1$. Corollary \ref{ Pi in cc} asserts that the set $\Pi_r$ introduced in the previous section is shown to be contained in $\cC_r''$. Proposition \ref{Pi r has density 1} shows that $\Pi_r$ has density $1$. This completes the proof.
\end{proof}
\bibliographystyle{alpha}
\bibliography{references}
\end{document}